\newtheorem{theorem}{Theorem}
\newtheorem{corollary}[theorem]{Corollary}
\newtheorem{proposition}[theorem]{Proposition}
\newtheorem{remark}[theorem]{Remark}
\font\sc=rsfs10
\newcommand{\cC}{\sc\mbox{C}\hspace{1.0pt}}
\newcommand{\cD}{\sc\mbox{D}\hspace{1.0pt}}
\newcommand{\cA}{\sc\mbox{A}\hspace{1.0pt}}
\font\scc=rsfs7
\newcommand{\ccD}{\scc\mbox{D}\hspace{1.0pt}}
\begin{document}
\title[Pyramids and $2$-representations]
{Pyramids and $2$-representations}

\author{Volodymyr Mazorchuk, Vanessa Miemietz and Xiaoting Zhang}

\begin{abstract}
We describe a diagrammatic procedure which lifts strict monoidal 
actions from  additive categories to categories of complexes 
avoiding any use of direct sums. As an application, we prove that 
every simple transitive $2$-representation of the $2$-category of 
projective  bimodules over a finite dimensional algebra is equivalent 
to a cell $2$-representation.
\end{abstract}

\maketitle

\section{Introduction and description of the results}\label{s1}

One of the most fundamental results in the classical representation theory is
the fact that the algebra $\mathrm{Mat}_{n\times n}(\Bbbk)$ of $n\times n$ matrices
over an algebraically closed field $\Bbbk$ has only one isomorphism class of simple
modules. The main result of the present paper is an analogue of this latter 
fact in $2$-representation theory.

Modern  $2$-representation theory originates from \cite{BFK,CR,KL,Ro} which 
emphasize the use of $2$-categories and $2$-representations for solving 
various problems in algebra and topology. The series \cite{MM1,MM2,MM3,MM4,MM5,MM6}
of papers started a systematic study of $2$-representations of so-called 
{\em finitary} $2$-categories which are natural $2$-analogues of finite dimensional algebras. 
The weak Jordan-H{\"o}lder theory developed in \cite{MM5} motivates the study of
so-called {\em simple transitive} $2$-representations which are suitable $2$-analogues 
of simple modules. It turns out that, in many cases, simple transitive $2$-representations
can be explicitly classified, see \cite{MM5,MM6,MaMa,KMMZ,MMMT,MMZ,MT,MZ,MZ2,Zh,Zi1,Zi3} 
for the results and \cite{Ma} for a detailed survey on the subject. In many, but not all, cases,
simple transitive $2$-representations are exhausted by so-called {\em cell} $2$-representations
defined already in \cite{MM1}. Cell $2$-representations are precisely the weak 
Jordan-H{\"o}lder subquotients of regular (a.k.a. principal) $2$-representations. 

For the moment, a classification of {\em simple} finitary $2$-categories is only available under
substantial additional assumptions (in particular, that of existence of a weak anti-equivalence and
adjunction morphisms), see \cite{MM3}. Roughly speaking, in that special case simple 
$2$-categories are classified by $2$-categories of projective bimodules over finite dimensional
self-injective algebras. Here self-injectivity is crucial as it is equivalent to existence
of a weak anti-equivalence and adjunction morphisms. For a finite dimensional algebra $A$, the 
corresponding $2$-category of projective bimodules is usually denoted by $\cC_A$.
With this result in hand, it is very natural to
consider $2$-categories of projective bimodules for arbitrary finite  dimensional algebras as
a basic family of simple $2$-categories. Outside the self-injective case, a number of examples
were studied in \cite{MZ,MZ2,MMZ,Zi3} and in all cases, using rather different arguments, it was
shown that simple transitive $2$-representations are exhausted by cell $2$-representations.

The main result of the present paper, Theorem~\ref{thm81}, asserts that the latter is the case
for any finite dimensional algebra over an algebraically closed field. This answers (positively)
\cite[Question~15]{Ma}. Compared to all previous studies, our approach uses two crucial new ideas.

The first idea is related to creating some adjunction morphisms. In case $A$ has a non-zero
projective injective module, some of the non-identity $1$-morphisms in $\cC_A$ form adjoint pairs
of functors. This was exploited in \cite{MZ,MZ2,Zi3}. In the present paper we suggest to enlarge
$\cC_A$ to a $2$-category $\cD_A$ by adding right adjoints to all $1$-morphisms in $\cC_A$.
The $2$-category $\cD_A$ can be realized using $A$-$A$-bimodules by adding the 
$A$-$A$-bimodule $A^*\otimes_{\Bbbk}A$ as the right adjoint of the $A$-$A$-bimodule $A\otimes_{\Bbbk}A$.
The $2$-category $\cD_A$ loses some of the symmetries of $\cC_A$ but compensates for this loss by 
possessing many pairs of adjoint $1$-morphisms. With the machinery developed in 
\cite{KM2,KMMZ,MZ,MMZ} and some tricks using matrix computations, 
we prove in Theorem~\ref{thm71} that simple transitive $2$-representations of $\cD_A$
are exhausted by  cell $2$-representations.

The second idea is related to the necessity of some kind of ``induction'' allowing us to connect
$2$-representations of $\cC_A$ with $2$-representations of $\cD_A$. In classical representation theory,
induction is done using tensor products. Unfortunately, this technology is not yet available in
$2$-representation theory, which creates major obstacles. In the particular case of the $2$-categories
$\cC_A$ and $\cD_A$, we propose a way around the problem. We observe that $1$-morphisms in 
$\cD_A$ can be identified with (homotopy classes of) {\em complexes} of $1$-morphisms in $\cC_A$.
This raises the problem of lifting the strict $2$-structure from $\cC_A$ to the corresponding
homotopy category of complexes. The main obstacle is the incompatibility of strictness and additivity
of $1$-morphisms. To resolve this, we substitute the category of complexes by a new category, 
which we call the category of {\em pyramids}, see Section~\ref{s2}. This category is equivalent to
the category of complexes, but its tensor structure can be defined avoiding direct
sums (that is, avoiding the construction of taking the total complex). We use pyramids to lift 
$2$-representations of $\cC_A$ to the homotopy category of pyramids over $\cC_A$ which can then be
restricted to $\cD_A$ as the latter lives inside pyramids over $\cC_A$. This gives a well-defined
``induction'' from $\cC_A$ to $\cD_A$ which allows us to prove Theorem~\ref{thm81} using Theorem~\ref{thm71}.

The paper is organized as follows: in Section~\ref{s2} we collect all the results related to the
definition and properties of pyramids. Section~\ref{s3} collects preliminaries on 
$2$-categories and $2$-representations. In Section~\ref{s7} we study $2$-representations of 
$\cD_A$. The main result of this section is  Theorem~\ref{thm71}. In Section~\ref{s8} we formulate
and prove Theorem~\ref{thm81} and also give a characterization of $2$-categories of the
form $\cC_A$ inside the class of finitary $2$-categories, see Theorem~\ref{thm86}.

\textbf{Acknowledgements:} This research was partially supported by
the Swedish Research Council and G{\"o}ran Gustafsson Stiftelse. The major part of this
research was done during the visit, in April 2017, of the second author to Uppsala University,
whose hospitality is gratefully acknowledged.

\section{Pyramids}\label{s2}

\subsection{Indices}\label{s2.1}
We denote by  $\mathbb{N}$ the set of positive integers and
by $\mathbb{Z}_{\geq 0}$ the set of non-negative integers. Further, we denote by $\mathbb{I}$
the set of all vectors $\mathbf{a}=(a_i)_{i\in\mathbb{N}}$, written 
$\mathbf{a}=(a_1,a_2,a_3,\dots)$, where $a_i\in\mathbb{Z}$ and $a_i=0$ for $i\gg 0$.
Note that $\mathbb{I}$ is an abelian group with respect to component-wise addition. 
The zero element in $\mathbb{I}$ is $\mathbf{0}:=(0,0,\dots)$.

For $i\in\mathbb{N}$, we denote by $\varepsilon_i$ the element $(a_i)_{i\in\mathbb{N}}\in\mathbb{I}$
such that $a_j=\delta_{i,j}$ for all $j\in \mathbb{N}$ (here $\delta_{i,j}$ is the Kronecker symbol).
Then $\mathbb{I}$ is a free abelian group with basis $\mathbf{B}:=\{\varepsilon_i:i\in\mathbb{N}\}$, in particular,
each element of $\mathbb{I}$ can be written uniquely as a linear combination (over $\mathbb{Z}$)
of elements in $\mathbf{B}$. 

For $\mathbf{a}\in \mathbb{I}$, the {\em height} of $\mathbf{a}$ is defined to be
$\mathrm{ht}(\mathbf{a})=\sum_i a_i\in\mathbb{Z}$. Note that the latter is well-defined as
only finitely many components of $\mathbf{a}$ are non-zero. For $k\in\mathbb{Z}$, we 
denote by $\mathbb{I}_k$ the set of all $\mathbf{a}\in \mathbb{I}$ of height $k$.

Denote by $\pi_0:\mathbb{I}\to \mathbb{I}$ the map which maps all $\mathbf{a}$ to $\mathbf{0}$.
Let $\sigma_0:\mathbb{I}\to \mathbb{I}$ be the identity map. For $k\in\mathbb{N}$, define
$\pi_k:\mathbb{I}\to \mathbb{I}$ as the map sending $\mathbf{a}\in \mathbb{I}$ to $(a_1,a_2,\dots,a_k,0,0,\dots)$.
Define also $\sigma_k:\mathbb{I}\to \mathbb{I}$ as the map sending 
$\mathbf{a}\in \mathbb{I}$ to $(a_{k+1},a_{k+2},\dots)$.

\subsection{Pyramids over an additive category}\label{s2.2}

Let $\mathcal{A}$ be an additive category. A {\em pyramid} $(X_{\bullet},d_{\bullet},n)$ 
over $\mathcal{A}$ is a tuple
\begin{displaymath}
(X_{\bullet}:=\{X_{\mathbf{a}};\mathbf{a}\in \mathbb{I}\},
d_{\bullet}:=\{d_{\mathbf{a},i}:\mathbf{a}\in \mathbb{I},i\in\mathbb{N}\},n),
\end{displaymath}
where
\begin{itemize}
\item $n\in \mathbb{Z}_{\geq 0}$,
\item all $X_{\mathbf{a}}$ are objects in $\mathcal{A}$,
\item each $d_{\mathbf{a},i}$ is a morphism in $\mathcal{A}$ from 
$X_{\mathbf{a}}$ to $X_{\mathbf{a}+\varepsilon_i}$, 
\end{itemize}
satisfying the following axioms:
\begin{enumerate}[(I)]
\item\label{axiom1} we have $X_{\mathbf{a}}=0$ unless  $a_i=0$, for all $i>n$,
\item\label{axiom2} there is $m\in\mathbb{Z}$ such that $X_{\mathbf{a}}=0$, unless 
all $a_i<m$,
\item\label{axiom3} we have $d_{\mathbf{a}+\varepsilon_i,i}\circ d_{\mathbf{a},i}=0$, for all $\mathbf{a}$ and $i$,
\item\label{axiom4} we have $d_{\mathbf{a}+\varepsilon_i,j}\circ d_{\mathbf{a},i}=-
d_{\mathbf{a}+\varepsilon_j,i}\circ d_{\mathbf{a},j}$, for all $\mathbf{a}$, $i$ and $j$.
\end{enumerate}

For a pyramid $(X_{\bullet},d_{\bullet},n)$ over $A$ and $k\in\mathbb{Z}$, we define $d^{(k)}$ as the
matrix $(d^{(k)}_{\mathbf{a},\mathbf{b}})_{\mathbf{a}\in \mathbb{I}_{k+1}}^{\mathbf{b}\in \mathbb{I}_k}$,
where
\begin{displaymath}
d^{(k)}_{\mathbf{a},\mathbf{b}}=
\begin{cases}
d_{\mathbf{b},i}, &\text{ if }\mathbf{a}=\mathbf{b}+\varepsilon_{i};\\
0,&\text{ otherwise.} 
\end{cases}
\end{displaymath}

Let $(X_{\bullet},d_{\bullet},n)$ and $(Y_{\bullet},\partial_{\bullet},m)$ be two pyramids over $\mathcal{A}$.
A {\em morphism} 
\begin{displaymath}
\alpha:(X_{\bullet},d_{\bullet},n)\to (Y_{\bullet},\partial_{\bullet},m) 
\end{displaymath}
of pyramids is defined as $\alpha=\{\alpha^{(k)}:k\in\mathbb{Z}\}$, where each $\alpha^{(k)}$ is a matrix 
$(\alpha^{(k)}_{\mathbf{a},\mathbf{b}})_{\mathbf{a}\in \mathbb{I}_k}^{\mathbf{b}\in \mathbb{I}_k}$ with
$\alpha^{(k)}_{\mathbf{a},\mathbf{b}}:X_{\mathbf{b}}\to Y_{\mathbf{a}}$, such that the following condition
is satisfied, for every $k$:
\begin{equation}\label{eq1}
\alpha^{(k+1)} \cdot d^{(k)}=\partial^{(k)}\cdot \alpha^{(k)}.
\end{equation}
Here both sides of the equality should be understood as products of the corresponding matrices.
This is well-defined as, for each $k$,
the matrix $\alpha^{(k)}$ contains only finitely many non-zero components.

Let $(X_{\bullet},d_{\bullet},n)$, $(Y_{\bullet},\partial_{\bullet},m)$ and 
$(Z_{\bullet},\daleth_{\bullet},l)$ be three pyramids over $\mathcal{A}$. Let further 
\begin{displaymath}
\alpha:(X_{\bullet},d_{\bullet},n)\to (Y_{\bullet},\partial_{\bullet}.m)\text{ and }
\beta:(Y_{\bullet},d_{\bullet},m)\to (Z_{\bullet},\daleth_{\bullet},l) 
\end{displaymath}
be morphisms of pyramids. Then their {\em composition} 
$\beta\circ \alpha: (X_{\bullet},d_{\bullet},n)\to(Z_{\bullet},\daleth_{\bullet},l)$ is defined 
as the morphism $\gamma$ of pyramids such that $\gamma^{(k)}=\beta^{(k)}\cdot \alpha^{(k)}$, for each $k$.
Again, the right hand side should be understood as the usual product of matrices. Thanks to the finiteness
properties mentioned in the previous paragraph, the product is well-defined. Condition~\eqref{eq1} is
satisfied because of the computation
\begin{displaymath}
\beta^{(k+1)}\cdot \alpha^{(k+1)}\cdot d^{(k)}= \beta^{(k+1)}\cdot\partial^{(k)}\cdot \alpha^{(k)}=
\daleth^{(k)}\cdot \beta^{(k)}\cdot \alpha^{(k)},
\end{displaymath}
where the first equality is justified by the fact that $\alpha$ is a morphism of pyramids and the
second equality is justified by the fact that $\beta$ is a morphism of pyramids.

For a pyramid $(X_{\bullet},d_{\bullet},n)$, the corresponding identity morphism $\omega$ is defined
by declaring each $\omega^{(k)}$ to be the matrix such that
\begin{displaymath}
\omega^{(k)}_{\mathbf{a},\mathbf{b}}=
\begin{cases}
\mathrm{id}_{X_{\mathbf{a}}}, & \mathbf{a}=\mathbf{b},\\
0,& \text{otherwise.} 
\end{cases}
\end{displaymath}

\begin{proposition}\label{prop1}
Let $\mathcal{A}$ be an additive category.
The construct consisting of all pyramids over $\mathcal{A}$, morphisms of pyramids, 
composition of morphisms and identity morphisms forms a category, denoted $\mathcal{P}(\mathcal{A})$.
\end{proposition}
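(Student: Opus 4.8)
The plan is to verify the four defining properties of a category directly from the definitions of composition and identity morphisms given above, since all the essential well-definedness checks (that the relevant matrix products are finite, that the compatibility condition \eqref{eq1} is preserved under composition, and that identity morphisms satisfy \eqref{eq1}) have already been carried out in the text preceding the statement. What remains is genuinely routine bookkeeping with matrices indexed by the sets $\mathbb{I}_k$.

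First I would recall that the text has already established: (a) morphisms of pyramids are well-defined, because each $\alpha^{(k)}$ has only finitely many nonzero entries (this follows from axioms \eqref{axiom1} and \eqref{axiom2}, which force $X_{\bullet}$ and $Y_{\bullet}$ to vanish outside a bounded region, so $\mathbb{I}_k$ intersected with the relevant supports is finite); (b) the composite $\beta\circ\alpha$ is again a morphism of pyramids, by the displayed computation $\beta^{(k+1)}\cdot\alpha^{(k+1)}\cdot d^{(k)}=\daleth^{(k)}\cdot\beta^{(k)}\cdot\alpha^{(k)}$; and (c) the identity morphism $\omega$ satisfies \eqref{eq1}, since $\omega^{(k)}$ is an identity matrix on each $\mathbb{I}_k$ (supported on the finitely many $\mathbf{a}$ with $X_{\mathbf{a}}\neq 0$) and hence commutes with everything degreewise.

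Next I would check associativity of composition. Given three composable morphisms $\alpha,\beta,\gamma$, the $k$-th component of both $(\gamma\circ\beta)\circ\alpha$ and $\gamma\circ(\beta\circ\alpha)$ is $\gamma^{(k)}\cdot\beta^{(k)}\cdot\alpha^{(k)}$; associativity then reduces to associativity of matrix multiplication in each fixed degree $k$, which holds because the matrices are ``locally finite'' (each has finitely many nonzero entries) so all the sums involved are finite sums in the additive category $\mathcal{A}$ and the usual interchange of summation order is legitimate. Finally, for the identity laws, I would observe that $(\omega\circ\alpha)^{(k)}=\omega^{(k)}\cdot\alpha^{(k)}=\alpha^{(k)}$ and $(\alpha\circ\omega)^{(k)}=\alpha^{(k)}\cdot\omega^{(k)}=\alpha^{(k)}$ for every $k$, again because $\omega^{(k)}$ acts as an identity matrix on the support of the relevant pyramid; hence $\omega\circ\alpha=\alpha$ and $\alpha\circ\omega=\alpha$.

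There is no real obstacle here: the only point requiring any care is keeping track of the finiteness of supports so that all matrix operations make sense, and this has already been dispatched in the text. I would therefore present the proof quite tersely, simply assembling the observations above into the statement that $\mathcal{P}(\mathcal{A})$ is a category.
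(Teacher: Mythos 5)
Your proof is correct and takes essentially the same approach as the paper, which simply observes that the category axioms follow directly from the definitions via the matrix-multiplication interpretation. You have merely spelled out the routine details (associativity via associativity of locally finite matrix products, identity laws via $\omega^{(k)}$ acting as an identity matrix) that the paper leaves implicit.
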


\begin{proof}
This follows directly from the definitions using the interpretation via matrix multiplication. 
\end{proof}

The category $\mathcal{P}(\mathcal{A})$ inherits from $\mathcal{A}$ the obvious preadditive structure given 
by com\-po\-nent-wise addition of morphisms. Furthermore, $\mathcal{P}(\mathcal{A})$ also inherits from $\mathcal{A}$
the additive structure by taking component-wise direct sums.

We have the canonical embedding of $\mathcal{A}$ into $\mathcal{P}(\mathcal{A})$ which sends an object 
$X\in \mathcal{A}$ to a pyramid concentrated at position $\mathbf{0}$ with the obvious assignment on
morphisms.

\subsection{Homotopy category of pyramids}\label{s2.3}

Let $(X_{\bullet},d_{\bullet},n)$ and $(Y_{\bullet},\partial_{\bullet},m)$ be two pyramids over $\mathcal{A}$.
A {\em homotopy} 
\begin{displaymath}
\chi:(X_{\bullet},d_{\bullet},n)\to (Y_{\bullet},\partial_{\bullet},m) 
\end{displaymath}
of pyramids is defined as $\chi=\{\chi^{(k)}:k\in\mathbb{Z}\}$, where each $\chi^{(k)}$ is a matrix 
$(\chi^{(k)}_{\mathbf{a},\mathbf{b}})_{\mathbf{a}\in \mathbb{I}_{k-1}}^{\mathbf{b}\in \mathbb{I}_k}$ with
$\chi^{(k)}_{\mathbf{a},\mathbf{b}}:X_{\mathbf{b}}\to Y_{\mathbf{a}}$.

If $\alpha:(X_{\bullet},d_{\bullet},n)\to (Y_{\bullet},\partial_{\bullet},m)$ is a morphism of pyramids,
we will say that the morphism $\alpha$ is {\em homotopic to zero}, denoted $\alpha\sim 0$, provided that there exists 
a homotopy $\chi:(X_{\bullet},d_{\bullet},n)\to (Y_{\bullet},\partial_{\bullet},m)$ such that 
\begin{displaymath}
\alpha^{(k)}=\chi^{(k+1)}\circ  d^{(k)}+ \partial^{(k-1)}\cdot \chi^{(k)}.
\end{displaymath}
As usual, null homotopic morphisms form an ideal of $\mathcal{P}(\mathcal{A})$, denoted $\mathcal{I}$,
and hence we may form the quotient $\mathcal{H}(\mathcal{A}):=\mathcal{P}(\mathcal{A})/\mathcal{I}$, which we will
call the {\em homotopy category of pyramids}.

\subsection{Pyramids versus complexes}\label{s2.4}
Let $\mathcal{A}$ be an additive category.
We denote by $\mathrm{Com}^-(\mathcal{A})$ the category of {\em bounded from the right} complexes over 
$\mathcal{A}$ and by $\mathcal{K}^-(\mathcal{A})$ the corresponding homotopy category.

The category $\mathrm{Com}^-(\mathcal{A})$ can be regarded as a subcategory of 
$ \mathcal{P}(\mathcal{A})$ in the obvious way, that is, by identifying the complex
\begin{equation}\label{eq2}
\dots \longrightarrow M_{k-1}\overset{f_{k-1}}{\longrightarrow}
M_{k}\overset{f_{k}}{\longrightarrow} M_{k+1}\longrightarrow\dots
\end{equation}
with the pyramid $(X_{\bullet},d_{\bullet},1)$, where 
\begin{displaymath}
X_{\mathbf{a}}=
\begin{cases}
M_k,&  \mathbf{a}=k\varepsilon_1,\\
0, & \text{ otherwise}; 
\end{cases}
\qquad\text{ and }
\qquad
d_{\mathbf{a},i}=
\begin{cases}
f_k,&  \mathbf{a}=k\varepsilon_1\text{ and }i=1,\\
0, & \text{ otherwise}. 
\end{cases}
\end{displaymath}

We can also define a functor $\mathcal{F}:\mathcal{P}(\mathcal{A})\to \mathrm{Com}^-(\mathcal{A})$ by sending
a pyramid $(X_{\bullet},d_{\bullet},n)$ to the complex of the form \eqref{eq2}  where 
\begin{displaymath}
M_k:=\bigoplus_{\mathrm{ht}(\mathbf{a})=k}X_{\mathbf{a}} 
\qquad\text{ and }\qquad
f_k=d^{(k)},
\end{displaymath}
with the action of $d^{(k)}$ on $M_k$ being the obvious one. Conditions \eqref{axiom1} and 
\eqref{axiom2} guarantee that $M_k$ is well-defined while conditions \eqref{axiom3} and 
\eqref{axiom4} imply that \eqref{eq2} is indeed a complex. On morphisms in $\mathcal{P}(\mathcal{A})$ the 
functor $\mathcal{F}$ is defined in the obvious way (using the natural action of a matrix
on a direct sum whose components index the columns of the matrix).

\begin{theorem}\label{thm2}
The functor $\mathcal{F}$ and the inclusion  of $\mathrm{Com}^-(\mathcal{A})$  into 
$\mathcal{P}(\mathcal{A})$ form a pair of  mutually quasi-inverse equivalences of categories.
\end{theorem}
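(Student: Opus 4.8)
The plan is to verify that the functor $\mathcal{F}\colon\mathcal{P}(\mathcal{A})\to\mathrm{Com}^-(\mathcal{A})$ restricts, along the canonical inclusion $\iota\colon\mathrm{Com}^-(\mathcal{A})\hookrightarrow\mathcal{P}(\mathcal{A})$, to the identity and that $\iota\circ\mathcal{F}$ is naturally isomorphic to the identity on $\mathcal{P}(\mathcal{A})$. First I would observe directly from the definitions that $\mathcal{F}\circ\iota=\mathrm{Id}_{\mathrm{Com}^-(\mathcal{A})}$ on the nose: a complex of the form \eqref{eq2} is sent by $\iota$ to the pyramid supported on the axis $\{k\varepsilon_1\}$, and since $\mathbb{I}_k$ then contains exactly one index with a nonzero object, the direct sum $M_k=\bigoplus_{\mathrm{ht}(\mathbf{a})=k}X_{\mathbf{a}}$ is literally $M_k$ again, with $f_k=d^{(k)}$ recovering $f_k$; the same bookkeeping applies to morphisms. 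So that half is immediate.

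The substantive half is constructing a natural isomorphism $\eta\colon\iota\circ\mathcal{F}\Rightarrow\mathrm{Id}_{\mathcal{P}(\mathcal{A})}$. For a pyramid $(X_{\bullet},d_{\bullet},n)$, the pyramid $\iota(\mathcal{F}(X_{\bullet},d_{\bullet},n))$ is the "collapsed" pyramid concentrated on the axis, with object $M_k=\bigoplus_{\mathrm{ht}(\mathbf{a})=k}X_{\mathbf{a}}$ at $k\varepsilon_1$. I would define $\eta_{(X_\bullet,d_\bullet,n)}$ to be the morphism of pyramids (in the direction from the collapsed pyramid to $(X_\bullet,d_\bullet,n)$) whose matrix $\eta^{(k)}$ has, as its entry from the component $X_{\mathbf{a}}$ of $M_k$ (sitting at $k\varepsilon_1$) to $X_{\mathbf{a}}$ (sitting at $\mathbf{a}$), the identity $\mathrm{id}_{X_\mathbf{a}}$, and zero elsewhere. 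This is manifestly a componentwise isomorphism (it is just a reindexing permutation of the summands), so it will be invertible in $\mathcal{P}(\mathcal{A})$ once we know it is a morphism. The two things to check are: (i) $\eta^{(k)}$ is a legitimate morphism of pyramids, i.e.\ it intertwines the differentials via \eqref{eq1}; and (ii) naturality in the pyramid.

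For (i), the point is that the differential of the collapsed pyramid is by construction the single matrix $d^{(k)}$ living on the axis edge from $k\varepsilon_1$ to $(k+1)\varepsilon_1$, whereas the differential of $(X_\bullet,d_\bullet,n)$ is distributed over the edges $d_{\mathbf{b},i}\colon X_\mathbf{b}\to X_{\mathbf{b}+\varepsilon_i}$; the commutativity relation \eqref{eq1} for $\eta$ unwinds, entry by entry, exactly into the defining equalities $d^{(k)}_{\mathbf{a},\mathbf{b}}=d_{\mathbf{b},i}$ when $\mathbf{a}=\mathbf{b}+\varepsilon_i$ and $0$ otherwise — so the relation holds tautologically. (Here one must be slightly careful that the matrix entries of $d^{(k)}$ in the collapsed pyramid, which by definition of $\mathcal{F}$ are again $d^{(k)}_{\mathbf{a},\mathbf{b}}$, match up with the reindexing; this is where axioms \eqref{axiom3} and \eqref{axiom4}, though already used to guarantee $\mathcal{F}$ lands in complexes, are silently in play.) For (ii), given a morphism $\alpha\colon(X_\bullet,d_\bullet,n)\to(Y_\bullet,\partial_\bullet,m)$ of pyramids, $\mathcal{F}(\alpha)$ acts on the direct-sum components by the same matrix entries $\alpha^{(k)}_{\mathbf{a},\mathbf{b}}$, so $\eta_{Y}\circ\iota\mathcal{F}(\alpha)=\alpha\circ\eta_{X}$ reduces again to an equality of matrices with identical entries after the reindexing. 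I expect the main obstacle to be purely notational bookkeeping: keeping straight which index set labels rows versus columns in each $\alpha^{(k)}$, $d^{(k)}$, $\eta^{(k)}$, and being careful that the finiteness conditions \eqref{axiom1}–\eqref{axiom2} (used to make $M_k$ a genuine, finite direct sum) are what make all these matrix products well-defined — but no genuine mathematical difficulty arises, since $\eta$ is just the canonical identification of a pyramid with its "total-complex repackaging" and everything is an identity on each $X_\mathbf{a}$. Finally, since $\mathcal{F}$ and $\iota$ are additive, they descend to the respective homotopy categories, and the same $\eta$ (whose components are isomorphisms, hence in particular not killed in any quotient) gives the quasi-inverse statement there as well, if desired.
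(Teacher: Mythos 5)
Your proof is correct and follows essentially the same route as the paper: observe that $\mathcal{F}\circ\iota=\mathrm{Id}$ on the nose, then build the natural isomorphism between $\iota\circ\mathcal{F}$ and the identity on $\mathcal{P}(\mathcal{A})$ using the canonical inclusions/projections between each $X_{\mathbf{a}}$ and the total object $M_k=\bigoplus_{\mathrm{ht}(\mathbf{a})=k}X_{\mathbf{a}}$. The paper writes down the transformation in the opposite direction (inclusions $\mathrm{Id}\Rightarrow\iota\circ\mathcal{F}$, with projections as the stated inverse) whereas you start from the projections, and your phrase ``componentwise isomorphism'' is slightly loose since the entries of $\eta^{(k)}$ are individually projections rather than isomorphisms, but the morphism of pyramids they assemble into is indeed invertible with the inclusion map as its inverse, so the argument goes through.
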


\begin{proof}
Including and then applying  $\mathcal{F}$ does nothing and hence is obviously isomorphic to the 
identity functor. On the other hand, given a pyramid $(X_{\bullet},d_{\bullet},n)$, we can define 
a morphism from $(X_{\bullet},d_{\bullet},n)$ to  $\mathcal{F}(X_{\bullet},d_{\bullet},n)$ using
the obvious inclusion of each $X_{\mathbf{a}}$ into the corresponding $M_k$. This gives a natural
transformation from the identity functor to $\mathcal{F}$ followed by the 
inclusion of $\mathrm{Com}^-(\mathcal{A})$  into $\mathcal{P}(\mathcal{A})$. 
Projecting $M_k$ onto every $X_{\mathbf{a}}$ defines an inverse natural transformation. 
Therefore  applying  $\mathcal{F}$ and then including is also isomorphic to the identity 
functor. The claim follows.
\end{proof}

The following is now clear by comparing the definitions.

\begin{corollary}\label{cor3}
The mutually inverse equivalences in Theorem~\ref{thm2} induce mutually inverse equivalences between
$\mathcal{K}^-(\mathcal{A})$ and $\mathcal{H}(\mathcal{A})$.
\end{corollary}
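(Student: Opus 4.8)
The plan is to deduce Corollary~\ref{cor3} formally from Theorem~\ref{thm2} together with the compatibility of the equivalence with null-homotopic morphisms. Concretely, the two functors of Theorem~\ref{thm2} — call them $\mathcal{F}\colon\mathcal{P}(\mathcal{A})\to\mathrm{Com}^-(\mathcal{A})$ and $\mathcal{G}\colon\mathrm{Com}^-(\mathcal{A})\hookrightarrow\mathcal{P}(\mathcal{A})$ — already induce an equivalence on the level of additive categories; what remains is to check that each of them sends the relevant homotopy ideal into the homotopy ideal on the other side, so that they descend to the quotients $\mathcal{H}(\mathcal{A})=\mathcal{P}(\mathcal{A})/\mathcal{I}$ and $\mathcal{K}^-(\mathcal{A})=\mathrm{Com}^-(\mathcal{A})/\mathcal{I}'$, where $\mathcal{I}'$ is the usual ideal of null-homotopic chain maps.

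First I would record what the homotopy ideals are in matrix language. For a morphism $\alpha$ of pyramids, $\alpha\sim 0$ means $\alpha^{(k)}=\chi^{(k+1)}\cdot d^{(k)}+\partial^{(k-1)}\cdot\chi^{(k)}$ for some system of matrices $\chi^{(k)}$ indexed as in Subsection~\ref{s2.3}; for a chain map $g\colon M_\bullet\to N_\bullet$, $g\sim 0$ means $g_k=s_{k+1}f_k+\partial_{k-1}s_k$ for maps $s_k\colon M_k\to N_{k-1}$. Since $\mathcal{F}$ is defined by taking $M_k=\bigoplus_{\mathrm{ht}(\mathbf{a})=k}X_{\mathbf{a}}$ and reading each family of matrices as the corresponding map between direct sums, the block-matrix identity defining $\alpha\sim 0$ is literally carried by $\mathcal{F}$ to the identity defining $\mathcal{F}(\alpha)\sim 0$: given a pyramid homotopy $\chi$, set $s_k$ to be the map $\bigoplus_{\mathrm{ht}(\mathbf{a})=k}X_{\mathbf{a}}\to\bigoplus_{\mathrm{ht}(\mathbf{b})=k-1}Y_{\mathbf{b}}$ with block entries $\chi^{(k)}_{\mathbf{b},\mathbf{a}}$, and the finiteness properties guarantee this is well-defined. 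Conversely, along the inclusion $\mathcal{G}$ a complex homotopy decomposes blockwise into a pyramid homotopy, because in the image of $\mathcal{G}$ each $M_k$ is supported at the single index $k\varepsilon_1$, so there is nothing to split. Hence $\mathcal{F}(\mathcal{I})\subseteq\mathcal{I}'$ and $\mathcal{G}(\mathcal{I}')\subseteq\mathcal{I}$.

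Next I would invoke the standard fact that a functor $F\colon\mathcal{B}\to\mathcal{C}$ sending a two-sided ideal $\mathcal{J}\subseteq\mathcal{B}$ into a two-sided ideal $\mathcal{J}'\subseteq\mathcal{C}$ induces a functor $\bar F\colon\mathcal{B}/\mathcal{J}\to\mathcal{C}/\mathcal{J}'$, and that this passage is functorial, so the isomorphisms $\mathcal{F}\mathcal{G}\cong\mathrm{id}$ and $\mathcal{G}\mathcal{F}\cong\mathrm{id}$ from Theorem~\ref{thm2} descend to isomorphisms $\bar{\mathcal{F}}\bar{\mathcal{G}}\cong\mathrm{id}_{\mathcal{K}^-(\mathcal{A})}$ and $\bar{\mathcal{G}}\bar{\mathcal{F}}\cong\mathrm{id}_{\mathcal{H}(\mathcal{A})}$. (One should note that the natural transformations exhibited in the proof of Theorem~\ref{thm2} are honest morphisms in $\mathcal{P}(\mathcal{A})$ and $\mathrm{Com}^-(\mathcal{A})$, hence automatically survive to the quotients; no homotopy-level subtlety arises here.) This yields the asserted mutually inverse equivalences between $\mathcal{K}^-(\mathcal{A})$ and $\mathcal{H}(\mathcal{A})$.

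The only place demanding genuine care — the ``main obstacle'', such as it is — is the bookkeeping in the first step: verifying that the block decomposition of $s_k$ into the $\chi^{(k)}_{\mathbf{b},\mathbf{a}}$ is compatible with the index conventions (the homotopy $\chi^{(k)}$ has rows indexed by $\mathbb{I}_{k-1}$ and columns by $\mathbb{I}_k$, matching a map $M_k\to N_{k-1}$) and that the sign-free nature of the differential $d^{(k)}$ as a matrix means the matrix identity transports without the Koszul-type sign corrections one might expect when forming total complexes. Once one is convinced these conventions line up — which is exactly the content of the phrase ``clear by comparing the definitions'' in the statement — the corollary is immediate.
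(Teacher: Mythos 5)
Your proposal is correct and fills in exactly what the paper elides with ``clear by comparing the definitions'': the homotopy ideal $\mathcal{I}$ in $\mathcal{P}(\mathcal{A})$ and the null-homotopy ideal in $\mathrm{Com}^-(\mathcal{A})$ correspond blockwise under $\mathcal{F}$ and the inclusion, so the quasi-inverse equivalences of Theorem~\ref{thm2} descend to the quotients. The index bookkeeping ($\chi^{(k)}$ as a map from height $k$ to height $k-1$) and your observation that no Koszul signs intervene here, since none appear in the definition of $d^{(k)}$ itself, are both accurate; this is the same argument the paper intends.
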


\subsection{Tensoring pyramids}\label{s2.5}
This subsection will hopefully clarify why we need pyramids. Let $\mathcal{A}$ be an additive 
strict monoidal category. We denote the tensor product in $\mathcal{A}$ by $\circ$ and 
the identity object in $\mathcal{A}$ by $\mathbbm{1}$. We assume that  $\circ$ is biadditive.
We think of $\mathcal{A}$ as a $2$-category with one object and denote by $\circ_0$ the tensor 
product of morphisms and by $\circ_1$  the usual composition of morphisms in $\mathcal{A}$.
We would like to extend the monoidal structure on $\mathcal{A}$ to $\mathrm{Com}^-(\mathcal{A})$ and
to $\mathcal{K}^-(\mathcal{A})$. However, we do not know how to do that. The problem is that
to make this work one has to use the construction of taking the total complex, which involves taking
direct sums. However, there is usually no {\em strict} distributivity in $\mathcal{A}$ and hence
it is not possible to ensure strict associativity of the product of complexes. Our idea is to 
substitute the category of complexes by the category of pyramids where the tensor structure can
be extended without taking any direct sums. Here it will also become clear how the last component
of the pyramid tuple is used. The following construction is inspired by and generalizes \cite[Section~3]{MMMT}.

For two pyramids $(X_{\bullet},d_{\bullet},n)$ and $(Y_{\bullet},\partial_{\bullet},m)$ we define
their tensor product
\begin{displaymath}
(X_{\bullet},d_{\bullet},n)\circ (Y_{\bullet},\partial_{\bullet},m)
\end{displaymath}
as the pyramid $(Z_{\bullet},\daleth_{\bullet},n+m)$, where, for $\mathbf{a}\in\mathbb{I}$, we have
\begin{displaymath}
Z_{\mathbf{a}}:=X_{\pi_n(\mathbf{a})}\circ Y_{\sigma_n(\mathbf{a})},
\end{displaymath}
and, for $\mathbf{a}\in\mathbb{I}$ and $i\in \mathbb{N}$, we define
\begin{displaymath}
\daleth_{\mathbf{a},i}:=
\begin{cases}
\hspace{15mm}d_{\pi_n(\mathbf{a}),i}\circ_0\mathrm{id}, & \text{ if } i\leq n,\\
(-1)^{\mathrm{ht}(\pi_n(\mathbf{a}))}\,\,\,\,\,\,\mathrm{id}\circ_0\partial_{\sigma_n(\mathbf{a}),i}& \text{ otherwise}.
\end{cases}
\end{displaymath}
Let $\alpha:(X_{\bullet},d_{\bullet},n)\to(\tilde{X}_{\bullet},\tilde{d}_{\bullet},\tilde{n})$
and $\beta:(Y_{\bullet},\partial_{\bullet},m)\to(\tilde{Y}_{\bullet},\tilde{\partial}_{\bullet},\tilde{m})$
be morphisms of pyramids. Their tensor product 
\begin{displaymath}
\alpha\circ_0\beta:(X_{\bullet},d_{\bullet},n)\circ (Y_{\bullet},\partial_{\bullet},m)\to
(\tilde{X}_{\bullet},\tilde{d}_{\bullet},\tilde{n})\circ (\tilde{Y}_{\bullet},\tilde{\partial}_{\bullet},\tilde{m})
\end{displaymath}
is defined by
\begin{displaymath}
(\alpha\circ_0\beta)_{\mathbf{a},\mathbf{b}}^{(k)}:=
\begin{cases}
\alpha^{(l)}_{\pi_n(\mathbf{a}),\pi_{\tilde{n}}(\mathbf{b})}\circ_0 
\beta^{(k-l)}_{\sigma_n(\mathbf{a}),\sigma_{\tilde{n}}(\mathbf{b})},&
\text{ if }l=\mathrm{ht}(\pi_n(\mathbf{a}))=\mathrm{ht}(\pi_{\tilde{n}}(\mathbf{b})),\\
0,& \text{ otherwise},
\end{cases}
\end{displaymath}
for any $k\in\mathbb{Z}$ and any $\mathbf{a},\mathbf{b}\in\mathbb{I}_k$. Note that, under the assumption
$\mathbf{a},\mathbf{b}\in\mathbb{I}_k$, the conditions  
$l=\mathrm{ht}(\pi_n(\mathbf{a}))=\mathrm{ht}(\pi_{\tilde{n}}(\mathbf{b}))$
and $k-l=\mathrm{ht}(\sigma_n(\mathbf{a}))=\mathrm{ht}(\sigma_{\tilde{n}}(\mathbf{b}))$ are equivalent.

\begin{proposition}\label{prop4}
The above endows  $\mathcal{P}(\mathcal{A})$ with the structure of a strict monoidal category.
\end{proposition}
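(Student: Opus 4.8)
The plan is to verify the monoidal category axioms for $\mathcal{P}(\mathcal{A})$ with the tensor product $\circ$ defined above, the unit object being the pyramid $\mathbbm{1}$ concentrated at position $\mathbf{0}$ (with $n=0$), and to check that everything is \emph{strict}. The verification splits into several independent pieces: (1) the tensor product of two pyramids is again a pyramid, i.e.\ axioms (\ref{axiom1})--(\ref{axiom4}) hold for $(Z_{\bullet},\daleth_{\bullet},n+m)$; (2) the tensor product of two morphisms is again a morphism of pyramids, i.e.\ it satisfies \eqref{eq1}; (3) $\circ_0$ is functorial, i.e.\ it respects identities and composition; (4) strict associativity on objects, morphisms, and the last component; (5) strict left and right unitality. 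I will treat these in order, since each reduces to a bookkeeping computation with the maps $\pi_n,\sigma_n$ and the sign $(-1)^{\mathrm{ht}(\pi_n(\mathbf{a}))}$.

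For (1): axioms (\ref{axiom1}) and (\ref{axiom2}) for $Z_{\bullet}$ are immediate from the corresponding finiteness/support conditions on $X_{\bullet}$ and $Y_{\bullet}$ together with the fact that $\pi_n(\mathbf{a})$ sees only coordinates $1,\dots,n$ and $\sigma_n(\mathbf{a})$ only coordinates $>n$ (so the new ``$n$'' for $Z$ is $n+m$, explaining the role of the last component of the tuple). For axioms (\ref{axiom3}) and (\ref{axiom4}), one expands $\daleth_{\mathbf{a}+\varepsilon_i,j}\circ\daleth_{\mathbf{a},i}$ using the three cases $i,j\le n$; $i,j>n$; and mixed. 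The cases $i,j\le n$ and $i,j>n$ follow directly from the corresponding axioms for $d_{\bullet}$ and $\partial_{\bullet}$ (in the second case, note $\pi_n(\mathbf{a}+\varepsilon_i)=\pi_n(\mathbf{a})$, so the prefactor $(-1)^{\mathrm{ht}(\pi_n(\mathbf{a}))}$ is unchanged and squares out). The mixed case $i\le n<j$ is where the sign is designed to do its work: by the interchange law $\circ_0$ versus $\circ_1$ in the $2$-category $\mathcal{A}$, the two composites $\daleth_{\mathbf{a}+\varepsilon_i,j}\circ\daleth_{\mathbf{a},i}$ and $\daleth_{\mathbf{a}+\varepsilon_j,i}\circ\daleth_{\mathbf{a},j}$ are equal up to the relative sign coming from $\mathrm{ht}(\pi_n(\mathbf{a}+\varepsilon_i))=\mathrm{ht}(\pi_n(\mathbf{a}))+1$ versus $\mathrm{ht}(\pi_n(\mathbf{a}+\varepsilon_j))=\mathrm{ht}(\pi_n(\mathbf{a}))$, which yields exactly the required anticommutativity. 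This mixed computation, keeping the interchange law and the sign bookkeeping straight, is the main obstacle of the whole proposition; everything else is comparatively formal.

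For (2), one checks \eqref{eq1} for $\alpha\circ_0\beta$ by expanding both sides as matrix products indexed by $\mathbb{I}_k$, splitting each index via $\pi_n$ and $\sigma_n$; the relation \eqref{eq1} for $\alpha$ and for $\beta$ separately, together with the matching signs $(-1)^{\mathrm{ht}(\pi_n(\mathbf{a}))}$ on both the $\daleth$ side and the $\widetilde{\daleth}$ side (which agree because $\alpha,\beta$ preserve heights blockwise), gives the result. For (3), functoriality, the identity morphism $\omega$ of a tensor product clearly factors as $\omega\circ_0\omega$ by the case split in the definition of $(\alpha\circ_0\beta)^{(k)}$, and compatibility with composition follows since block-diagonal-type matrix multiplication commutes with the splitting $\mathbb{I}_k\leftrightarrow\bigsqcup_l\mathbb{I}_l\times\mathbb{I}_{k-l}$ and $\circ_0$ is bifunctorial in $\mathcal{A}$. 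For (4), strict associativity: on objects one uses that $\circ$ in $\mathcal{A}$ is strictly associative together with $\pi_n,\sigma_n$ composing correctly (e.g.\ $\pi_n\circ\pi_{n+n'}=\pi_n$, $\sigma_{n}\circ\pi_{n+n'}$ lands in the middle block, $\sigma_{n+n'}=\sigma_{n'}\circ\sigma_n$ up to relabelling), and the last component is strictly associative since $(n+m)+l=n+(m+l)$ in $\mathbb{Z}_{\ge0}$; the differentials match because the accumulated sign on the middle block is $(-1)^{\mathrm{ht}(\pi_n(\mathbf{a}))}$ from either bracketing and on the outer block is $(-1)^{\mathrm{ht}(\pi_{n+m}(\mathbf{a}))}=(-1)^{\mathrm{ht}(\pi_n(\mathbf{a}))+\mathrm{ht}(\text{middle})}$, which is exactly the sign obtained by iterating the two-fold formula. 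Associativity on morphisms is then automatic from the object-level index bookkeeping plus strict associativity of $\circ_0$ in $\mathcal{A}$. Finally, for (5), tensoring with the unit pyramid $\mathbbm{1}$ at $\mathbf{0}$ with $n=0$: on the right, $\pi_n$ is unchanged and $\sigma_n(\mathbf{a})$ must equal $\mathbf{0}$, so $Z_{\mathbf{a}}=X_{\pi_n(\mathbf{a})}\circ\mathbbm{1}=X_{\pi_n(\mathbf{a})}=X_{\mathbf{a}}$ strictly (as $X_{\mathbf{a}}=0$ unless $\sigma_n(\mathbf{a})=\mathbf{0}$ by axiom (\ref{axiom1}) for $X$), and the sign $(-1)^{\mathrm{ht}(\pi_n(\mathbf{a}))}$ never appears since no differential index exceeds $n$; on the left one uses $\pi_0=\pi_0$, $\sigma_0=\mathrm{id}$ and strict left unitality $\mathbbm{1}\circ Y=Y$ in $\mathcal{A}$, with $\mathrm{ht}(\pi_0(\mathbf{a}))=0$ killing all signs. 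Assembling (1)--(5) gives that $(\mathcal{P}(\mathcal{A}),\circ,\mathbbm{1})$ is a strict monoidal category, proving Proposition~\ref{prop4}.
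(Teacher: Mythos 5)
Your proof is correct and takes essentially the same approach as the paper's: explicitly checking that the tensor product of two pyramids satisfies the pyramid axioms (with the mixed case $i\leq n<j$ being where the sign $(-1)^{\mathrm{ht}(\pi_n(\mathbf{a}))}$ is designed to produce anticommutativity), and then verifying the strict monoidal axioms with the one-point unit pyramid. You are more explicit than the paper, which only writes out the case analysis for axiom (\ref{axiom4}) and states that the remaining strictness verifications ``follow directly from the construction,'' and you verify well-definedness on morphisms by a direct computation rather than by appealing to the equivalence with complexes via $\mathcal{F}$.
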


\begin{proof}
We start by checking that $(Z_{\bullet},\daleth_{\bullet},n+m)$ is indeed a pyramid.
It follows directly from the definitions
that  \eqref{axiom1}, \eqref{axiom2} and \eqref{axiom3} are satisfied. 
So, we only need to check \eqref{axiom4}. Let $i,j\in \mathbb{N}$
be different.
If both $i,j\leq n$, then  the corresponding part of \eqref{axiom4} 
for $(Z_{\bullet},\daleth_{\bullet},n+m)$ follows directly
from the definitions and \eqref{axiom4} for $(X_{\bullet},d_{\bullet},n)$.

Assume that both $i,j>n$. Then the anti-commutative square
\begin{displaymath}
\xymatrix{
Y_{\mathbf{c}+\varepsilon_i}\ar[r]^{\partial_{\mathbf{c}+\varepsilon_i,j}}&Y_{\mathbf{c}+\varepsilon_j\varepsilon_i} \\
Y_{\mathbf{c}}\ar[r]_{\partial_{\mathbf{c},j}}\ar[u]^{\partial_{\mathbf{c},i}}&
Y_{\mathbf{c}+\varepsilon_j}\ar[u]_{\partial_{\mathbf{c}+\varepsilon_j,i}}
}
\end{displaymath}
given by  \eqref{axiom4} for $(Y_{\bullet},\partial_{\bullet},m)$ induces one of the following
squares:
\begin{displaymath}
\xymatrix{
X_{\mathbf{b}}\otimes 
Y_{\mathbf{c}+\varepsilon_i}\ar[rr]^{-\mathrm{id}\otimes \partial_{\mathbf{c}+\varepsilon_i,j}}&&
X_{\mathbf{b}}\otimes Y_{\mathbf{c}+\varepsilon_j+\varepsilon_i} \\
X_{\mathbf{b}}\otimes Y_{\mathbf{c}}
\ar[rr]_{-\mathrm{id}\otimes \partial_{\mathbf{c},j}}\ar[u]^{-\mathrm{id}\otimes \partial_{\mathbf{c},i}}&&
X_{\mathbf{b}}\otimes Y_{\mathbf{c}+\varepsilon_j}\ar[u]_{-\mathrm{id}\otimes \partial_{\mathbf{c}+\varepsilon_j,i}}
}
\end{displaymath}
or
\begin{displaymath}
\xymatrix{
X_{\mathbf{b}}\otimes 
Y_{\mathbf{c}+\varepsilon_i}\ar[rr]^{\mathrm{id}\otimes \partial_{\mathbf{c}+\varepsilon_i,j}}&&
X_{\mathbf{b}}\otimes Y_{\mathbf{c}+\varepsilon_j+\varepsilon_i} \\
X_{\mathbf{b}}\otimes Y_{\mathbf{c}}
\ar[rr]_{\mathrm{id}\otimes \partial_{\mathbf{c},j}}\ar[u]^{\mathrm{id}\otimes d_{\mathbf{c},i}}&&
X_{\mathbf{b}}\otimes Y_{\mathbf{c}+\varepsilon_j}\ar[u]_{\mathrm{id}\otimes d_{\mathbf{c}+\varepsilon_j,i}}
}
\end{displaymath}
(depending on the parity of $\mathrm{ht}(\mathbf{b})$).
Clearly, both of them give the corresponding part of \eqref{axiom4}  for $(Z_{\bullet},\daleth_{\bullet},n+m)$.

If $i\leq n$ and $j>n$, then we obtain one of the following two situations:
\begin{displaymath}
\xymatrix{
X_{\mathbf{b}+\varepsilon_i}\otimes 
Y_{\mathbf{c}}\ar[rr]^{-\mathrm{id}\otimes \partial_{\mathbf{c},j}}&&
X_{\mathbf{b}+\varepsilon_i}\otimes Y_{\mathbf{c}+\varepsilon_j} \\
X_{\mathbf{b}}\otimes Y_{\mathbf{c}}
\ar[rr]_{\mathrm{id}\otimes \partial_{\mathbf{c},j}}\ar[u]^{d_{\mathbf{b},i}\otimes \mathrm{id}}&&
X_{\mathbf{b}}\otimes Y_{\mathbf{c}+\varepsilon_j}\ar[u]_{d_{\mathbf{b},i}\otimes \mathrm{id}}
}
\end{displaymath}
or
\begin{displaymath}
\xymatrix{
X_{\mathbf{b}+\varepsilon_i}\otimes 
Y_{\mathbf{c}}\ar[rr]^{\mathrm{id}\otimes \partial_{\mathbf{c},j}}&&
X_{\mathbf{b}+\varepsilon_i}\otimes Y_{\mathbf{c}+\varepsilon_j} \\
X_{\mathbf{b}}\otimes Y_{\mathbf{c}}
\ar[rr]_{-\mathrm{id}\otimes \partial_{\mathbf{c},j}}\ar[u]^{d_{\mathbf{b},i}\otimes \mathrm{id}}&&
X_{\mathbf{b}}\otimes Y_{\mathbf{c}+\varepsilon_j}\ar[u]_{d_{\mathbf{b},i}\otimes \mathrm{id}}
}
\end{displaymath}
(again, depending on the parity of $\mathrm{ht}(\mathbf{b})$).
Both of them give the corresponding part of \eqref{axiom4}  for $(Z_{\bullet},\daleth_{\bullet},n+m)$.

This, together with the observation that our tensor product of morphisms produces the usual tensor product
of morphisms of complexes after applying $\mathcal{F}$, implies that our tensor product is well-defined. 
All axioms of strict monoidal category now follow
directly from our construction as soon as we observe 
that the unit in  $\mathcal{P}(\mathcal{A})$ is the pyramid $(X_{\bullet},d_{\bullet},0)$, where 
$X_{\mathbf{0}}=\mathbbm{1}$, all other $X_{\mathbf{c}}=0$ and all $d_{\mathbf{c},i}=0$.
\end{proof}

\begin{corollary}\label{cor9}  
The equivalences of Theorem~\ref{thm2} and Corollary~\ref{cor3} are compatible with the
monoidal structure and are hence biequivalences.
\end{corollary}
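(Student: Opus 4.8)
The statement asserts that the equivalences of Theorem~\ref{thm2} (between $\mathrm{Com}^-(\mathcal{A})$ and $\mathcal{P}(\mathcal{A})$) and Corollary~\ref{cor3} (between $\mathcal{K}^-(\mathcal{A})$ and $\mathcal{H}(\mathcal{A})$) are monoidal, hence biequivalences of monoidal categories. The plan is to work entirely at the level of $\mathcal{P}(\mathcal{A})$ versus $\mathrm{Com}^-(\mathcal{A})$ and then transport everything through the quotient by null-homotopic morphisms.

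First I would recall that $\mathrm{Com}^-(\mathcal{A})$ already carries its classical monoidal structure given by the total-complex construction: $(M_\bullet \otimes N_\bullet)_k = \bigoplus_{p+q=k} M_p \circ N_q$ with the usual Koszul-signed differential. The key observation, already flagged inside the proof of Proposition~\ref{prop4} ("our tensor product of morphisms produces the usual tensor product of morphisms of complexes after applying $\mathcal{F}$"), is that the functor $\mathcal{F}:\mathcal{P}(\mathcal{A})\to\mathrm{Com}^-(\mathcal{A})$ intertwines the two tensor products. So the main task is to promote $\mathcal{F}$ and the inclusion to \emph{monoidal} functors. For the inclusion $\iota:\mathrm{Com}^-(\mathcal{A})\hookrightarrow\mathcal{P}(\mathcal{A})$, I would check directly from the definition in Section~\ref{s2.5} that $\iota(M_\bullet)\circ\iota(N_\bullet)$ is the pyramid with $Z_{\mathbf{a}} = M_p\circ N_q$ when $\mathbf{a}=p\varepsilon_1 + q\varepsilon_2$ and $0$ otherwise, with differentials $d_{\mathbf{a},1}=f_p\circ_0\mathrm{id}$ and $d_{\mathbf{a},2}=(-1)^p\,\mathrm{id}\circ_0 g_q$; applying $\mathcal{F}$ to this pyramid collapses the two-dimensional grid of index-vectors supported on $\varepsilon_1,\varepsilon_2$ onto the single axis $\varepsilon_1$ by summing over diagonals $p+q=k$, and the signs match the Koszul convention exactly. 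Thus $\mathcal{F}\circ(\iota\times\iota)(-\circ-)$ is canonically identified with the total tensor product of complexes, and since $\mathcal{F}\circ\iota\cong\mathrm{Id}$ by Theorem~\ref{thm2}, the inclusion is (strong) monoidal; conversely $\mathcal{F}$ inherits a monoidal structure, with the coherence isomorphism $\mathcal{F}(X_\bullet\circ Y_\bullet)\cong\mathcal{F}(X_\bullet)\otimes\mathcal{F}(Y_\bullet)$ given diagonal-by-diagonal by the identity on each $X_{\pi_n(\mathbf{a})}\circ Y_{\sigma_n(\mathbf{a})}$. Unit compatibility is immediate: the monoidal unit of $\mathcal{P}(\mathcal{A})$ is the pyramid concentrated at $\mathbf{0}$ with value $\mathbbm{1}$ (last line of the proof of Proposition~\ref{prop4}), which $\mathcal{F}$ sends to the complex $\mathbbm{1}$ in degree $0$, the unit of $\mathrm{Com}^-(\mathcal{A})$. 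One then checks the hexagon/associativity and pentagon coherence conditions, but these reduce to the corresponding (strict, on the pyramid side) identities already established in Proposition~\ref{prop4} together with the classical ones for complexes, so no genuinely new computation is needed.

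For the homotopy-category statement I would argue that the tensor bifunctor on $\mathcal{P}(\mathcal{A})$ descends to $\mathcal{H}(\mathcal{A})$: if $\alpha\sim 0$ then $\alpha\circ_0\beta\sim 0$ and $\beta\circ_0\alpha\sim 0$, which follows by tensoring a null-homotopy $\chi$ for $\alpha$ with $\beta$ (with the appropriate sign), exactly as in the classical complex case — and indeed it suffices to transport the classical fact across $\mathcal{F}$, since $\mathcal{F}$ sends the homotopy ideal $\mathcal{I}$ onto the null-homotopic morphisms of $\mathrm{Com}^-(\mathcal{A})$. Hence $\mathcal{H}(\mathcal{A})$ is monoidal, and the induced functors between $\mathcal{K}^-(\mathcal{A})$ and $\mathcal{H}(\mathcal{A})$ are monoidal because they are induced from monoidal functors. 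Being equivalences that are monoidal, they are biequivalences of monoidal categories (equivalently, of the associated one-object bicategories).

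The step I expect to be the most delicate is pinning down the sign bookkeeping in the monoidal coherence isomorphism for $\mathcal{F}$: the pyramid tensor product uses the sign $(-1)^{\mathrm{ht}(\pi_n(\mathbf{a}))}$ on the "second-factor" differentials, while after applying $\mathcal{F}$ one must recover precisely the Koszul sign $(-1)^p$ on $M_p\circ N_q\to M_p\circ N_{q+1}$, and one must verify this is compatible with reassociating three pyramids (where the split point $\pi_n$ versus $\pi_{n+m}$ shifts). This is exactly the point where the "last component $n$ of the pyramid tuple" does its job — it records where to cut the index vector — and the verification is a matter of carefully matching $\pi_{n+m} = \pi_n$ on the first block and $\sigma_n$ restricted appropriately on the rest. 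Everything else is a routine unravelling of the definitions already given, so I would keep the write-up short, citing Theorem~\ref{thm2}, Corollary~\ref{cor3} and Proposition~\ref{prop4} and only spelling out the coherence isomorphism and the sign check.
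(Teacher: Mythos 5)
Your proposal is correct and takes the same approach as the paper: the paper's proof is simply ``This follows directly from the definitions,'' and your write-up is a careful unpacking of exactly that sentence — in particular, the observation (already recorded inside the proof of Proposition~\ref{prop4}) that $\mathcal{F}$ intertwines the pyramid tensor product with the total-complex tensor product, and the sign/associativity bookkeeping coming from the split point encoded by the last component $n$ of the pyramid tuple.
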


\begin{proof}
This follows directly from the definitions. 
\end{proof}

\subsection{Pyramids and strict monoidal actions}\label{s2.6}
Let  $\mathcal{A}$ be as in the previous subsection and $\mathcal{C}$ an additive category
equipped with a strict monoidal action $\lozenge:\mathcal{A}\times \mathcal{C}\to \mathcal{C}$
by additive functors.

For a pyramid $(X_{\bullet},d_{\bullet},n)\in\mathcal{P}(\mathcal{A})$ and a pyramid
$(Y_{\bullet},\partial_{\bullet},m)\in\mathcal{P}(\mathcal{C})$ we define 
\begin{displaymath}
(X_{\bullet},d_{\bullet},n)\blacklozenge (Y_{\bullet},\partial_{\bullet},m)
\end{displaymath}
as the pyramid $(Z_{\bullet},\daleth_{\bullet},n+m)\in\mathcal{P}(\mathcal{C})$, where, for 
$\mathbf{a}\in\mathbb{I}$, we have
\begin{displaymath}
Z_{\mathbf{a}}:=X_{\pi_n(\mathbf{a})}\lozenge Y_{\sigma_n(\mathbf{a})},
\end{displaymath}
and, for $\mathbf{a}\in\mathbb{I}$ and $i\in \mathbb{N}$, we define
\begin{displaymath}
\daleth_{\mathbf{a},i}:=
\begin{cases}
\hspace{15mm}d_{\pi_n(\mathbf{a}),i}\lozenge\mathrm{id}, & \text{ if } i\leq n,\\
(-1)^{\mathrm{ht}(\pi_n(\mathbf{a}))}\,\,\,\,\,\,\mathrm{id}\lozenge\partial_{\sigma_n(\mathbf{a}),i}& \text{ otherwise}.
\end{cases}
\end{displaymath}

Let $\beta:(Y_{\bullet},\partial_{\bullet},m)\to(\tilde{Y}_{\bullet},\tilde{\partial}_{\bullet},\tilde{m})$
be a morphism of pyramids. We define the morphism $(X_{\bullet},d_{\bullet},n)\blacklozenge \beta$ as 
\begin{displaymath}
\gamma:
(X_{\bullet},d_{\bullet},n)\blacklozenge (Y_{\bullet},\partial_{\bullet},m)
\to({X}_{\bullet},{d}_{\bullet},{n})\blacklozenge (\tilde{Y}_{\bullet},\tilde{\partial}_{\bullet},\tilde{m})
\end{displaymath}
such that
\begin{displaymath}
(\gamma)_{\mathbf{a},\mathbf{b}}^{(k)}:=
\omega^{(\mathrm{ht}(\pi_n(\mathbf{a})))}_{\pi_n(\mathbf{a}),\pi_{\tilde{n}}(\mathbf{b})}\lozenge 
\beta^{(k-\mathrm{ht}(\pi_n(\mathbf{a}))}_{\sigma_n(\mathbf{a}),\sigma_{\tilde{n}}(\mathbf{b})},
\end{displaymath}
for any $k\in\mathbb{Z}$ and any $\mathbf{a},\mathbf{b}\in\mathbb{I}_k$
(recall the definition of $\omega^{(l)}$ from Section~\ref{s2.2}). 
This turns $(X_{\bullet},d_{\bullet},n)\blacklozenge {}_-$ into an additive endofunctor of 
$\mathcal{P}(\mathcal{C})$.

Finally, let $\alpha:(X_{\bullet},d_{\bullet},n)\to(\tilde{X}_{\bullet},\tilde{d}_{\bullet},\tilde{n})$
be a morphism of pyramids. We define 
\begin{displaymath}
\alpha\blacklozenge (Y_{\bullet},\partial_{\bullet},m):
(X_{\bullet},d_{\bullet},n)\blacklozenge (Y_{\bullet},\partial_{\bullet},m)\to
(\tilde{X}_{\bullet},\tilde{d}_{\bullet},\tilde{n})\blacklozenge (Y_{\bullet},\partial_{\bullet},m)
\end{displaymath}
as the morphism $\eta$ given by 
\begin{displaymath}
(\eta)_{\mathbf{a},\mathbf{b}}^{(k)}:=
\alpha^{(\mathrm{ht}(\pi_n(\mathbf{a})))}_{\pi_n(\mathbf{a}),\pi_{\tilde{n}}(\mathbf{b})}\lozenge 
\omega^{(k-\mathrm{ht}(\pi_n(\mathbf{a})))}_{\sigma_n(\mathbf{a}),\sigma_{\tilde{n}}(\mathbf{b})},
\end{displaymath}
for any $k\in\mathbb{Z}$ and any $\mathbf{a},\mathbf{b}\in\mathbb{I}_k$. 

\begin{proposition}\label{prop5}
The construct $\blacklozenge:\mathcal{P}(\mathcal{A})\times \mathcal{P}(\mathcal{C})\to \mathcal{P}(\mathcal{C}) $ 
is a strict monoidal action by additive functors. This action descends to a strict monoidal action
\begin{displaymath}
\blacklozenge:\mathcal{H}(\mathcal{A})\times \mathcal{H}(\mathcal{C})\to \mathcal{H}(\mathcal{C}). 
\end{displaymath}
\end{proposition}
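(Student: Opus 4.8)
The plan is to build on the construction of $\circ$ from Proposition~\ref{prop4}, of which $\blacklozenge$ is essentially a ``module'' version with $\mathcal{C}$ replacing the second copy of $\mathcal{A}$; all the formulas for objects, differentials, and morphisms are literally the same except that $\circ_0$ is replaced by $\lozenge$ acting on morphisms. So the first thing I would check is that $(Z_{\bullet},\daleth_{\bullet},n+m)$ is again a pyramid in $\mathcal{P}(\mathcal{C})$. Axioms \eqref{axiom1} and \eqref{axiom2} are immediate from the definition of $Z_{\mathbf{a}}$ together with the same axioms for the two input pyramids and the fact that $\lozenge$ preserves zero objects; axiom \eqref{axiom3} follows from $d_{\mathbf{a}+\varepsilon_i,i}\circ d_{\mathbf{a},i}=0$, from $\partial_{\mathbf{a}+\varepsilon_i,i}\circ \partial_{\mathbf{a},i}=0$, and from functoriality of $\lozenge$ (which makes $(d\lozenge\mathrm{id})\circ_1(d\lozenge\mathrm{id})=(d\circ_1 d)\lozenge\mathrm{id}$, and similarly $(\mathrm{id}\lozenge\partial)\circ_1(\mathrm{id}\lozenge\partial)=\mathrm{id}\lozenge(\partial\circ_1\partial)$, using that $\lozenge$ is a strict action so interchange is on the nose). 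The anticommutativity axiom \eqref{axiom4} is handled exactly as in the proof of Proposition~\ref{prop4} by the three-case analysis ($i,j\le n$; $i,j>n$; $i\le n<j$): the only sign-bearing case is the mixed one, where the sign $(-1)^{\mathrm{ht}(\pi_n(\mathbf{a}))}$ changes by $-1$ when one composes with a $d_{\mathbf{b},i}\lozenge\mathrm{id}$ that increases $\mathrm{ht}(\pi_n)$ by one, producing precisely the minus sign required in \eqref{axiom4}. Functoriality of $\lozenge$ guarantees that $(d\lozenge\mathrm{id})\circ_1(\mathrm{id}\lozenge\partial)=\mathrm{id}\lozenge\partial$ composed with $d\lozenge\mathrm{id}$ both equal $d\lozenge\partial$, so the squares commute up to the recorded sign, which is all \eqref{axiom4} asks.

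Next I would verify that $\blacklozenge$ is functorial and biadditive in each variable. That $(X_{\bullet},d_{\bullet},n)\blacklozenge{}_-$ is an endofunctor amounts to checking that $\gamma$ as defined is a morphism of pyramids, i.e.\ satisfies \eqref{eq1}; this is a matrix identity that, entry by entry, reduces to the interchange law for $\lozenge$ together with the fact that $\beta$ satisfies \eqref{eq1} and $\omega$ is the identity on the $X$-side. Compatibility with composition and identities on the $\mathcal{C}$-side is clear since $\omega\lozenge(\beta'\circ\beta)=(\omega\lozenge\beta')\circ_1(\omega\lozenge\beta)$ again by functoriality. Additivity is component-wise. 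The argument that $\alpha\blacklozenge(Y_{\bullet},\partial_{\bullet},m)$ is well-defined and that the two actions commute (so that $\blacklozenge$ is genuinely a functor on the product category, not just separately in each variable) is the analogue of the interchange check in Proposition~\ref{prop4}, and here again it is forced by the strictness of $\lozenge$ on $2$-cells; the key point is that for $\mathbf{a},\mathbf{b}\in\mathbb{I}_k$ the heights $\mathrm{ht}(\pi_n(\mathbf{a}))$ and $\mathrm{ht}(\pi_{\tilde n}(\mathbf{b}))$ must agree for a non-zero entry, so the bidegree $(l,k-l)$ of the relevant block is unambiguous.

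To get that $\blacklozenge$ is a \emph{strict monoidal action}, I would verify the two coherence equations on the nose: the unit pyramid $\mathbbm{1}$ of $\mathcal{P}(\mathcal{A})$ from Proposition~\ref{prop4} satisfies $\mathbbm{1}\blacklozenge(Y_{\bullet},\partial_{\bullet},m)=(Y_{\bullet},\partial_{\bullet},m)$ exactly because $\pi_0=\pi_0$ collapses everything to $\mathbf{0}$, $\sigma_0$ is the identity, and $\mathbbm{1}\lozenge{}_-$ is the identity functor of $\mathcal{C}$; and associativity $\big((X_{\bullet})\circ(X'_{\bullet})\big)\blacklozenge(Y_{\bullet})=(X_{\bullet})\blacklozenge\big((X'_{\bullet})\blacklozenge(Y_{\bullet})\big)$ holds because on objects both sides give $X_{\pi_n(\mathbf{a})}\circ\big(X'_{\ast}\lozenge Y_{\ast}\big)=\big(X_{\ast}\circ X'_{\ast}\big)\lozenge Y_{\ast}$ by strict associativity of the action, and the $\pi/\sigma$ bookkeeping matches because $\pi_{n+n'}$ and the iterated $\pi_n,\pi_{n'}$ cut the index vector at compatible spots, and the signs $(-1)^{\mathrm{ht}(\pi)}$ add up correctly (this last point is the same computation that makes $\circ$ associative in Proposition~\ref{prop4}, so I would simply invoke that the sign conventions were designed to make it work). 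Finally, for the descent to $\mathcal{H}$: I would show that if $\beta\sim 0$ via a homotopy $\chi$ on the $\mathcal{C}$-side, then $(X_{\bullet},d_{\bullet},n)\blacklozenge\beta\sim 0$ via the homotopy whose entries are $\omega^{(\mathrm{ht}(\pi_n))}_{\pi_n,\pi_{\tilde n}}\lozenge\chi^{(k-\mathrm{ht}(\pi_n))}_{\sigma_n,\sigma_{\tilde n}}$, and symmetrically for $\alpha\sim0$ on the $\mathcal{A}$-side; the homotopy identity $\gamma^{(k)}=\tilde\chi^{(k+1)}\circ\daleth^{(k)}+\daleth^{(k-1)}\circ\tilde\chi^{(k)}$ then follows by expanding the block matrix products and using the chain-homotopy identity for $\chi$, with the signs in $\daleth$ on the $\mathcal{C}$-part exactly matching those used to define $\blacklozenge$ on the nose. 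Since $\mathcal{I}$ is a two-sided ideal, $\blacklozenge$ therefore descends to $\mathcal{H}(\mathcal{A})\times\mathcal{H}(\mathcal{C})\to\mathcal{H}(\mathcal{C})$.

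The main obstacle I anticipate is purely bookkeeping: getting the signs $(-1)^{\mathrm{ht}(\pi_n(\mathbf{a}))}$ to line up consistently through the associativity isomorphism and through the homotopy formula, since a wrong sign convention would make one of the coherence squares anticommute rather than commute. Conceptually nothing is hard once one accepts that $\blacklozenge$ is built by the \emph{same} recipe as $\circ$ with one factor in $\mathcal{C}$; so the cleanest write-up is to point at the proof of Proposition~\ref{prop4} for the pyramid axioms and the monoidal-category axioms, and only spell out the two new ingredients: functoriality of $\lozenge$ where before one used biadditivity of $\circ_0$, and the homotopy-invariance computation needed for the descent to $\mathcal{H}$.
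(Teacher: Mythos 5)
Your proposal is correct and takes essentially the same route as the paper, whose entire proof of Proposition~\ref{prop5} is the single line ``Mutatis mutandis the proof of Proposition~\ref{prop4}.'' You simply spell out what \emph{mutatis mutandis} entails: replacing biadditivity/interchange of $\circ_0$ by functoriality and strictness of $\lozenge$, rechecking the three-case sign analysis for axiom~\eqref{axiom4}, and verifying the homotopy descent by transporting chain homotopies through $\omega\lozenge\chi$ and $\chi\lozenge\omega$ --- all of which matches the intended argument.
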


\begin{proof}
Mutatis mutandis the proof of Proposition~\ref{prop4}.
\end{proof}

\section{Finitary $2$-categories and their $2$-representations}\label{s3}

In this section we recall basic facts from the classical $2$-representations theory
developed in \cite{MM1}-\cite{MM6}, see also \cite{Ma} for a survey and \cite{Ma0} for more details.

\subsection{Finitary $2$-categories}\label{s3.1}

Following \cite{MM1}, a {\em finitary} $2$-category $\cC$ over an algebraically closed field $\Bbbk$ 
is a $2$-category with finitely many objects in which each $\cC(\mathtt{i},\mathtt{j})$ is 
a small category equivalent 
to the category of projective modules for 
some finite  dimensional $\Bbbk$-algebra (which depends on both $\mathtt{i}$
and $\mathtt{j}$) and such that all compositions are (bi)additive and $\Bbbk$-linear and
all identity $1$-morphisms are indecomposable. 

In what follows, $\cC$ is always assumed to be a finitary $2$-category over $\Bbbk$.
All functors are assumed to be additive and $\Bbbk$-linear.

\subsection{$2$-representations}\label{s3.2}

A $2$-representation of $\cC$ is a $2$-functor to some fixed {\em target} $2$-category. 
All $2$-representations of $\cC$ form a $2$-category where $1$-morphisms are strong
natural transformations and $2$-morphisms are modifications, see \cite[Subsection~2.3]{MM3}. 
$2$-representations will be denoted by bold capital roman letters $\mathbf{M}$, $\mathbf{N}$ etc. 

Taking, as the target $2$-category, the $2$-category of finitary additive $\Bbbk$-linear categories,
we obtain the $2$-category $\cC\text{-}\mathrm{afmod}$ of {\em finitary}
$2$-representations of $\cC$. 
Taking, as the target $2$-category, the $2$-category of finitary abelian $\Bbbk$-linear categories,
we obtain the $2$-category $\cC\text{-}\mathrm{mod}$ of {\em abelian}
$2$-representations of $\cC$. 

There is a diagrammatic {\em abelianization} $2$-functor 
$\overline{\,\,\cdot\,\,}:\cC\text{-}\mathrm{afmod}\to \cC\text{-}\mathrm{mod}$,
see \cite[Subsection~3.1]{MM1}.

For each $\mathtt{i}\in \cC$, we have the {\em principal} $2$-representation
$\mathbf{P}_{\mathtt{i}}:=\cC(\mathtt{i},{}_-)$, for which we have the usual Yoneda lemma,
see \cite[Lemma~3]{MM3}.

\subsection{Simple transitive $2$-representations}\label{s3.3}

A finitary $2$-representation $\mathbf{M}$ of $\cC$ is called {\em transitive} provided that,
for any $\mathtt{i}$ and $\mathtt{j}$ and any indecomposable objects $X\in \mathbf{M}(\mathtt{i})$
and $Y\in \mathbf{M}(\mathtt{i})$, there is a $1$-morphism $\mathrm{F}$ of $\cC$ such that 
$Y$ is isomorphic to a direct summand of $\mathbf{M}(\mathrm{F})\, X$. 

A finitary $2$-representation $\mathbf{M}$ of $\cC$ is called {\em simple} provided that
it does not have any non-zero proper $\cC$-invariant ideals. We note that simplicity implies
transitivity, however, we will always speak about {\em simple transitive} $2$-representations. 
There is a weak Jordan-H{\"o}lder theory for finitary 
$2$-representations of $\cC$ developed in \cite{MM5}.

\subsection{Cells and cell $2$-representations}\label{s3.4}

For indecomposable $1$-morphisms $\mathrm{F}$ and $\mathrm{G}$ in $\cC$, we write
$\mathrm{F}\geq_L\mathrm{G}$ provided that $\mathrm{F}$ is isomorphic to a direct summand
of $\mathrm{H}\circ\mathrm{G}$, for some $1$-morphism $\mathrm{H}$. This defines the
{\em left} preorder $\geq_L$, equivalence classes of which are called {\em left cells}.
Similarly one defines the {\em right} preorder $\geq_R$ and {\em right cells}, and also
the {\em two-sided} preorder $\geq_J$ and {\em two-sided cells}.

A two-sided cell $\mathcal{J}$ is called {\em strongly regular} provided that no two of its left (or right)
cells are comparable with respect to the left (respectively right) order and the intersection
of any left and any right cell in $\mathcal{J}$ contains precisely one element.

Given a left cell $\mathcal{L}$, there is a unique $\mathtt{i}$ such that all $1$-morphisms
in $\mathcal{L}$ start at $\mathtt{i}$. The corresponding {\em cell $2$-representation}
$\mathbf{C}_{\mathcal{L}}$ is defined as the subquotient of $\mathbf{P}_{\mathtt{i}}$ obtained
by taking the unique simple transitive quotient of the subrepresentation of $\mathbf{P}_{\mathtt{i}}$
given by the additive closure of all $1$-morphisms $\mathrm{F}$ such that 
$\mathrm{F}\geq_L \mathcal{L}$. The $2$-representation $\mathbf{C}_{\mathcal{L}}$ is simple transitive.
We refer to \cite[Subsection~6.5]{MM2} for details.

If $\mathbf{M}$ is a simple transitive $2$-representation of $\cC$, then the set of two-sided cells
whose elements do not annihilate $\mathbf{M}$ contains a unique maximal element called the {\em apex}
of $\mathbf{M}$, see \cite[Subsection~3.2]{CM}.

\subsection{Bookkeeping tools}\label{s3.5}

Let $\mathbf{M}$ be a finitary $2$-representation of $\cC$. Then, to each $1$-morphism $\mathrm{F}$,
we can associate a matrix $[\mathrm{F}]$ with non-negative integer coefficients, 
whose rows and columns are indexed by isomorphism classes of indecomposable objects in 
\begin{displaymath}
\mathcal{M}:=\coprod_{\mathtt{i}} \mathbf{M}(\mathtt{i}),
\end{displaymath}
and the  $X\times Y$-entry gives the multiplicity of $X$ as a direct summand of $\mathbf{M}(\mathrm{F})\, Y$.

If we additionally know that $\overline{\mathbf{M}}(\mathrm{F})$ is exact, we also have the
matrix $\llbracket \mathrm{F}\rrbracket$ with non-negative integer coefficients, 
whose rows and columns are indexed by isomorphism classes of simple objects in $\overline{\mathcal{M}}$
and the  $X\times Y$-entry gives the composition multiplicity of $X$ in $\overline{\mathbf{M}}(\mathrm{F})\, Y$.

If $(\mathrm{F},\mathrm{G})$ is an adjoint pair of $1$-morphisms, then $\overline{\mathbf{M}}(\mathrm{G})$
is exact and $[\mathrm{F}]^t=\llbracket \mathrm{G}\rrbracket$, see \cite[Lemma~10]{MM5}.

\section{The $2$-category $\cD_A$ and its $2$-representations}\label{s7}

\subsection{Definition of  $\cD_A$}\label{s7.1}

Let $\Bbbk$ be an algebraically closed field and $A$ a connected, basic, finite dimensional associative (unital)
$\Bbbk$-algebra. Let $\mathcal{C}$ be a small category equivalent to $A\text{-}\mathrm{mod}$. As usual, we denote
by $*$ the $\Bbbk$-duality $\mathrm{Hom}_{\Bbbk}({}_-,\Bbbk)$. We define the
$2$-category $\cD_A=\cD_{A,\mathcal{C}}$ to have
\begin{itemize}
\item one object $\mathtt{i}$ (which we identify with $\mathcal{C}$);
\item as $1$-morphisms all endofunctors of $\mathcal{C}$ isomorphic to tensoring with 
$A$-$A$-bi\-mo\-du\-les in $\mathrm{add}\big(A\oplus (A\otimes_{\Bbbk}A)\oplus(A^{*}\otimes_{\Bbbk}A)\big)$;
\item as $2$-morphisms all natural transformations of functors.
\end{itemize}

We denote by $\mathrm{F}$ and $\mathrm{G}$ the functors given by tensoring with $A\otimes_{\Bbbk}A$
and $A^{*}\otimes_{\Bbbk}A$, respectively. We have the multiplication table for these
functors given by
\begin{equation}\label{eq7.3}
\begin{array}{c||c|c}
X\setminus Y&\mathrm{F}&\mathrm{G}\\
\hline\hline
\mathrm{F}&\mathrm{F}^{\oplus \dim(A)}&\mathrm{F}^{\oplus \dim(A)}\\
\mathrm{G}&\mathrm{G}^{\oplus \dim(A)}&\mathrm{G}^{\oplus \dim(A)} 
\end{array}
\end{equation}
and an adjoint pair $(\mathrm{F},\mathrm{G})$ of $1$-morphisms in $\cD_A$
(see e.g. \cite[Section~7.3]{MM1}).

We denote by $\mathcal{J}$ the unique two-sided cell for $\cD_A$ that does not contain the identity $1$-morphism.
It consists of the indecomposable constituents of $\mathrm{F}$ and $\mathrm{G}$.

\begin{proposition}\label{prop79}
The $2$-category $\cD_A$ is {\em $\mathcal{J}$-simple} in the sense that any non-zero 
$2$-ideal of $\cD_A$ contains the identity $2$-morphisms for all $1$-morphisms in $\mathcal{J}$.
\end{proposition}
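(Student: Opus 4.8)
The plan is to show that any nonzero $2$-ideal $\mathcal{I}$ of $\cD_A$ contains the identity $2$-morphism on some indecomposable $1$-morphism in $\mathcal{J}$, and then to propagate this to all of $\mathcal{J}$ using the multiplication table \eqref{eq7.3} and the adjunction $(\mathrm{F},\mathrm{G})$. First I would observe that since $\mathcal{I}$ is nonzero, it contains a nonzero $2$-morphism $\phi: \mathrm{P}\to\mathrm{Q}$ for some $1$-morphisms $\mathrm{P},\mathrm{Q}$. Decomposing into indecomposables and using that $2$-ideals are closed under horizontal composition (whiskering) and vertical composition with arbitrary $2$-morphisms, one reduces to the case where $\phi$ is a nonzero $2$-morphism between indecomposable $1$-morphisms. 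The identity $1$-morphism is simple in the relevant bimodule sense (it generates its own block and has no maps to or from the bimodules generating $\mathcal{J}$ other than via $\mathcal{J}$ itself), so any $2$-morphism landing in or emanating from $\mathbbm{1}$ that lies in a nonzero ideal would be handled separately — but since $\cD_A$ is built from $\mathrm{add}(A\oplus F\oplus G)$ as a bimodule category, a nonzero $\phi$ can be pre- and post-composed to produce a nonzero $2$-morphism between summands of $\mathrm{F}$ or of $\mathrm{G}$.

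The key technical step is the following: given a nonzero $2$-morphism $\phi$ between indecomposable summands of, say, $\mathrm{F}\circ\mathrm{F}$ or $\mathrm{F}$, I would use the explicit description of the bimodule $A\otimes_\Bbbk A$ and its endomorphisms. The $1$-morphism $\mathrm{F}$ is isomorphic to tensoring with $A\otimes_\Bbbk A$, whose endomorphism algebra (as an $A$-$A$-bimodule) is $A\otimes_\Bbbk A^{\mathrm{op}}$, or more precisely $\mathrm{End}_{A\text{-}A}(A\otimes_\Bbbk A)\cong \Bbbk$ when $A$ is basic and connected only up to the indecomposable constituents — here I would instead use the multiplication table \eqref{eq7.3}: since $\mathrm{F}\circ\mathrm{F}\cong\mathrm{F}^{\oplus\dim A}$, and more importantly since $\mathrm{F}\circ\mathrm{G}\circ\mathrm{F}$ contains $\mathrm{F}$ as a summand with the adjunction $(\mathrm{F},\mathrm{G})$ providing the unit $\eta:\mathbbm{1}\to\mathrm{G}\circ\mathrm{F}$ and counit $\varepsilon:\mathrm{F}\circ\mathrm{G}\to\mathbbm{1}$, the composite $\varepsilon_{\mathrm{F}}\circ(\mathrm{F}\circ\eta)$ (the triangle identity) shows $\mathrm{id}_{\mathrm{F}}$ factors through $\mathrm{F}\circ\mathrm{G}\circ\mathrm{F}$. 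The argument then runs: whisker $\phi$ by $\mathrm{G}$ on one side and use the counit to extract a nonzero endomorphism of an indecomposable summand of $\mathrm{F}$, which, being a local algebra, must then generate $\mathrm{id}$ on that summand after a unit-multiple rescaling — the crucial point being that a nonzero $2$-morphism between indecomposables in a Krull–Schmidt $2$-category, once it becomes an endomorphism of an indecomposable, is either nilpotent or invertible, and one must rule out the purely nilpotent case using the adjunction structure and the fact that $\cD_A$ is fiat-like along $\mathcal{J}$.

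Once $\mathrm{id}_{\mathrm{F}_0}\in\mathcal{I}$ for one indecomposable summand $\mathrm{F}_0$ of $\mathrm{F}$, I propagate: whiskering $\mathrm{id}_{\mathrm{F}_0}$ with $\mathrm{G}$ and using $\mathrm{G}\circ\mathrm{F}\cong\mathrm{G}^{\oplus\dim A}$ and $\mathrm{F}\circ\mathrm{G}\cong\mathrm{F}^{\oplus\dim A}$ from \eqref{eq7.3}, together with the idempotents picking out indecomposable summands, forces $\mathrm{id}$ on every indecomposable summand of $\mathrm{F}$ and of $\mathrm{G}$ into $\mathcal{I}$; since $\mathcal{J}$ consists precisely of these, we are done. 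I expect the main obstacle to be the middle step — ruling out the nilpotent alternative, i.e.\ showing that a nonzero $2$-morphism in a nonzero ideal cannot consist entirely of radical $2$-morphisms. The clean way around this is to invoke the adjunction $(\mathrm{F},\mathrm{G})$ to "rotate" $\phi$ via mates until it becomes a $2$-morphism with a nonzero component $\mathbbm{1}\to\mathbbm{1}$-summand is impossible since $\mathbbm{1}$ is indecomposable and not in $\mathcal{J}$, so instead one rotates until $\phi$ becomes a nonzero scalar multiple of a split idempotent projecting onto an indecomposable summand of $\mathrm{F}$; concretely, use that the composition $\mathrm{F}\circ\mathrm{G}\circ\mathrm{F}\to\mathrm{F}$ given by $\varepsilon_{\mathrm{F}}$ is split epi and $\mathrm{F}\to\mathrm{F}\circ\mathrm{G}\circ\mathrm{F}$ given by $\mathrm{F}\circ\eta$ is split mono, so one transports the nonzero $\phi$ through these splittings. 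This is exactly the kind of argument that appears in the $\cC_A$ analogue in \cite{MM1} and its reliance on fiat structure is why enlarging $\cC_A$ to $\cD_A$ (where $(\mathrm{F},\mathrm{G})$ is a genuine adjoint pair) is essential.
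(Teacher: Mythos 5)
Your proposal takes a genuinely different route from the paper and, as stated, has a gap at exactly the step you yourself flag as the main obstacle. You propose to rule out the possibility that everything produced by whiskering a nonzero $\phi$ lies in the radical by ``rotating via mates'' using the adjunction $(\mathrm{F},\mathrm{G})$ and the triangle identities. But the adjunction cannot do this job: taking the mate of $\phi\colon\mathrm{F}\to\mathrm{F}$ under $(\mathrm{F},\mathrm{G})$ gives $(\mathrm{G}\phi)\circ\eta\colon\mathbbm{1}\to\mathrm{G}\mathrm{F}$, and rotating back via $\varepsilon_{\mathrm{F}}\circ(\mathrm{F}\cdot{-})$ just returns $\phi$ by the triangle identity and naturality of $\varepsilon$. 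So the rotation is circular; it never produces an element of the ideal that you did not already have, and in particular it does not convert a radical endomorphism into a non-radical one. The triangle identity $\mathrm{id}_{\mathrm{F}}=\varepsilon_{\mathrm{F}}\circ(\mathrm{F}\eta)$ is true whether or not the ideal is zero, so factoring $\mathrm{id}_{\mathrm{F}}$ through $\mathrm{F}\mathrm{G}\mathrm{F}$ by itself gives no purchase on membership in the ideal. Also note that $\mathrm{End}(\mathrm{F})\cong A\otimes_{\Bbbk}A^{\mathrm{op}}$ is usually far from semisimple, so a nonzero endomorphism of $\mathrm{F}$ being radical is a real possibility that must be confronted directly.

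The paper's proof bypasses the adjunction entirely and instead uses a $\Bbbk$-split trick that is the correct key idea here. Reduce the given nonzero $2$-morphism to a nonzero bimodule map $\varphi\colon(A\oplus A^{*})\otimes_{\Bbbk}A\to(A\oplus A^{*})\otimes_{\Bbbk}A$ (an endomorphism of $\mathrm{F}\oplus\mathrm{G}$), then whisker by $\mathrm{F}$ on \emph{both} sides. Since $\mathrm{F}$ corresponds to the free bimodule $A\otimes_{\Bbbk}A$, whiskering by it on both sides amounts to applying $A\otimes_{\Bbbk}({-})\otimes_{\Bbbk}A$ and regarding $\varphi$ merely as a $\Bbbk$-linear map on the middle $\Bbbk$-vector space. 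Choosing a $\Bbbk$-basis of that middle space identifies $\mathrm{F}\circ(\mathrm{F}\oplus\mathrm{G})\circ\mathrm{F}$ with a direct sum of copies of $\mathrm{F}$ and the whiskered map with a matrix whose entries are scalar multiples of $\mathrm{id}_{A\otimes_{\Bbbk}A}$. Nonzero plus scalar forces a non-radical entry, which after composing with suitable projections and inclusions gives $\mathrm{id}$ on some indecomposable constituent of $\mathrm{F}$; propagation across $\mathcal{J}$ via the multiplication table is then straightforward. In particular, no adjunction is needed for this proposition: the $\Bbbk$-split trick already works in $\cC_A$ and in $\cD_A$ alike; the adjunction $(\mathrm{F},\mathrm{G})$ is essential elsewhere (in the proof of Theorem~\ref{thm71}, for the bookkeeping identities such as $[\mathrm{F}]^{t}=\llbracket\mathrm{G}\rrbracket$), but not here.

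Your reduction steps (passing to a $2$-morphism between indecomposables, discarding the $\mathbbm{1}$-component, propagating $\mathrm{id}$ across $\mathcal{J}$ once you have it on one summand) are all fine; it is only the central non-radicality argument that needs to be replaced by the whiskering-by-$\mathrm{F}$ computation above.
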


\begin{proof}
Given a non-zero endomorphism of $\mathrm{F}\oplus\mathrm{G}$ corresponding to an $A$-$A$-bimodule
homomorphism
\begin{displaymath}
\varphi:(A\oplus A^*)\otimes_{\Bbbk}A \to (A\oplus A^*)\otimes_{\Bbbk}A,
\end{displaymath}
we can pre- and post-compose it with the identity on $A\otimes_{\Bbbk}A$ to obtain a non-zero non-radical
endomorphism of a direct sum of copies of $\mathrm{F}$. The claim follows.
\end{proof}

\subsection{Cell $2$-representations of $\cD_A$}\label{s7.5}

Let $\mathbf{N}$ denote the $2$-representation of $\cD_A$ given by the natural action of $\cD_A$
on the additive category generated by all projective and all injective objects in $\mathcal{C}$.

\begin{proposition}\label{prop75}
Let $\mathcal{L}$ be a left cell in $\mathcal{J}$. Then $\mathbf{C}_{\mathcal{L}}$ is equivalent
to $\mathbf{N}$.
\end{proposition}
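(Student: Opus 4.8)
The plan is to identify explicitly the left cells inside $\mathcal{J}$ and the corresponding cell $2$-representations, and then to match $\mathbf{C}_{\mathcal{L}}$ with $\mathbf{N}$ by producing an explicit equivalence. First I would observe, using the multiplication table \eqref{eq7.3}, that $\mathcal{J}$ contains exactly two left cells: the left cell $\mathcal{L}_{\mathrm{F}}$ consisting of the indecomposable summands of $\mathrm{F}$ (tensoring with $A\otimes_{\Bbbk}A$), and the left cell $\mathcal{L}_{\mathrm{G}}$ consisting of the indecomposable summands of $\mathrm{G}$ (tensoring with $A^{*}\otimes_{\Bbbk}A$). Indeed, since $A$ is basic and connected, $A\otimes_{\Bbbk}A\cong\bigoplus_{i,j}Ae_i\otimes_{\Bbbk}e_jA$, whose indecomposable summands $Ae_i\otimes_{\Bbbk}e_jA$ are parametrized by pairs $(i,j)$ of primitive idempotents, and similarly for $A^{*}\otimes_{\Bbbk}A$ with summands $(Ae_i)^{*}\otimes_{\Bbbk}e_jA$ — wait, one should be careful here: $A^*$ as a left $A$-module decomposes into indecomposable injectives $I_i$, so the summands of $A^{*}\otimes_{\Bbbk}A$ are $I_i\otimes_{\Bbbk}e_jA$. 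In both cases the left-hand index $i$ is preserved under left multiplication by any $1$-morphism (which only rearranges the right index and multiplies by $\dim A$), so the left cells are indexed by $i$; but all these are in the same left cell because $Ae_i\otimes_{\Bbbk}e_jA\circ Ae_j\otimes_{\Bbbk}e_kA$ has $Ae_i\otimes_{\Bbbk}e_kA$ as a summand, giving $\geq_L$ in both directions. So there is one left cell of ``$\mathrm{F}$-type'' and one of ``$\mathrm{G}$-type.''

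Next I would compute $\mathbf{C}_{\mathcal{L}_{\mathrm{F}}}$ directly from the definition in Subsection~\ref{s3.4}. The principal $2$-representation $\mathbf{P}_{\mathtt{i}}=\cD_A(\mathtt{i},{}_-)$ has underlying category the additive closure of $\mathbbm{1},\mathrm{F},\mathrm{G}$; the subrepresentation generated by $\{\mathrm{H}:\mathrm{H}\geq_L\mathcal{L}_{\mathrm{F}}\}$ is, by the multiplication table, the additive closure of the summands of $\mathrm{F}$ together with those of $\mathrm{G}$ (since $\mathrm{F}\geq_L\mathrm{F}$ and also $\mathrm{G}\geq_L\mathrm{F}$? — no: $\mathrm{G}\circ\mathrm{F}=\mathrm{G}^{\oplus\dim A}$, so $\mathrm{G}\geq_L\mathrm{F}$, hence $\mathcal{L}_{\mathrm{G}}$ is \emph{above} $\mathcal{L}_{\mathrm{F}}$ in $\geq_L$); so the subrepresentation associated to $\mathcal{L}_{\mathrm{F}}$ is generated by summands of $\mathrm{F}$ alone. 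Its simple transitive quotient has underlying additive category equivalent to $\mathrm{add}$ of the summands of $\mathrm{F}$ as a left module, i.e. $\mathrm{add}(A\otimes_{\Bbbk}A)$ viewed via the left factor, which is $\mathrm{add}(A)=\mathrm{proj}\text{-}\mathcal{C}$. A parallel computation for $\mathcal{L}_{\mathrm{G}}$: the associated subrepresentation is generated by summands of $\mathrm{G}$, and since $\mathrm{F}\circ\mathrm{G}=\mathrm{F}^{\oplus\dim A}$ we also have $\mathrm{F}\geq_L\mathcal{L}_{\mathrm{G}}$, so this subrepresentation is all of the radical part and its simple transitive quotient is $\mathrm{add}(A^{*})=\mathrm{inj}\text{-}\mathcal{C}$. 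In either case the action of $\mathrm{F}$ on an object $P$ is $(A\otimes_{\Bbbk}A)\otimes_A P\cong A\otimes_{\Bbbk}(A\otimes_A P)\cong A^{\oplus\dim(A\otimes_A P)}$, landing in projectives, and the action of $\mathrm{G}$ lands in injectives; the point is that the action is exactly the natural action of $\cD_A$ on projectives, resp. injectives.

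Finally I would compare this with $\mathbf{N}$. The subtlety is that $\mathbf{N}$ acts on the additive category generated by \emph{both} projectives \emph{and} injectives simultaneously, whereas each cell subquotient just computed is carried by projectives alone or injectives alone. The resolution is that $\mathbf{N}$ is itself transitive — starting from any projective one reaches every injective by applying $\mathrm{G}$ (since $\mathrm{G}\,P=(A^{*}\otimes_{\Bbbk}A)\otimes_A P$ is a sum of copies of the injective cogenerator $A^{*}$) and from any injective one reaches every projective by applying $\mathrm{F}$ — and one checks it is simple transitive, e.g. via $\mathcal{J}$-simplicity of $\cD_A$ (Proposition~\ref{prop79}) combined with the general fact that a $\mathcal{J}$-simple $2$-category has all its faithful simple transitive $2$-representations with apex $\mathcal{J}$ of the same ``size,'' or more directly by exhibiting that any $\cD_A$-stable ideal in $\mathbf{N}$ would have to contain an identity $2$-morphism on $\mathrm{F}$ and hence be everything. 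Then the restriction of $\mathbf{N}$ to the projectives is a transitive subquotient realizing exactly $\mathbf{C}_{\mathcal{L}_{\mathrm{F}}}$, and by uniqueness of the simple transitive quotient it \emph{equals} $\mathbf{C}_{\mathcal{L}_{\mathrm{F}}}$ as an object of $\cD_A\text{-}\mathrm{afmod}$; but $\mathbf{N}$ is already simple transitive, so this restriction is all of $\mathbf{N}$ up to equivalence, giving $\mathbf{C}_{\mathcal{L}_{\mathrm{F}}}\simeq\mathbf{N}$. The same argument with injectives gives $\mathbf{C}_{\mathcal{L}_{\mathrm{G}}}\simeq\mathbf{N}$. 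I expect the main obstacle to be the bookkeeping needed to verify simple transitivity of $\mathbf{N}$ cleanly and to identify the cell subquotient inside $\mathbf{P}_{\mathtt{i}}$ without getting the left order backwards; once that is pinned down, the matching is formal. One should also double-check the edge case where $A$ is self-injective, in which projectives and injectives coincide and the two left cells are genuinely distinct but give the same $\mathbf{N}$ — consistent with the statement, which asserts equivalence for \emph{any} left cell $\mathcal{L}$ in $\mathcal{J}$.
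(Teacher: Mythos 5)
Your identification of the left cells of $\mathcal{J}$ is incorrect, and this error propagates through the whole argument. For an indecomposable bimodule $\mathrm{H}=Ae_i\otimes_{\Bbbk}e_jA$, composition on the left gives
\begin{displaymath}
(Ae_k\otimes_{\Bbbk}e_lA)\otimes_A(Ae_i\otimes_{\Bbbk}e_jA)\cong (Ae_k\otimes_{\Bbbk}e_jA)^{\oplus\dim(e_lAe_i)},
\end{displaymath}
so it is the \emph{right} tensor factor $e_jA$ that is preserved under left multiplication, not the left factor $Ae_i$ as you claim. Consequently left cells are indexed by $j\in\{1,\dots,n\}$, and, since $\mathrm{F}\circ\mathrm{G}\cong\mathrm{F}^{\oplus\dim A}$ and $\mathrm{G}\circ\mathrm{F}\cong\mathrm{G}^{\oplus\dim A}$ give $\geq_L$ in both directions (a fact you notice yourself, without drawing the correct conclusion), each left cell contains \emph{both} the $\mathrm{F}$-type summands $Ae_i\otimes_{\Bbbk}e_jA$ and the $\mathrm{G}$-type summands $I_i\otimes_{\Bbbk}e_jA$ with that fixed $j$. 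There are thus $n$ left cells, not two, and there is no such thing as a left cell $\mathcal{L}_{\mathrm{F}}$ consisting of the summands of $\mathrm{F}$.

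Because of this, the cell $2$-representation you compute is not supported on projectives alone: with the correct $\mathcal{L}$, the underlying additive category of $\mathbf{C}_{\mathcal{L}}$ already has indecomposables indexed by $\{Ae_i\}\cup\{I_i\}$, i.e.\ projectives together with injectives, which is exactly $\mathbf{N}(\mathtt{i})$; no ``patch'' is needed. Your proposed patch is in any case not available, since the additive category generated by projectives alone in $\mathbf{N}(\mathtt{i})$ is not closed under $\mathrm{G}$ (which takes every projective to a sum of copies of $A^*$), so it is not a $\cD_A$-stable subcategory and cannot be a $2$-subrepresentation or subquotient. The correct route, which the paper takes, is simply to pick a simple object $L_1$ corresponding to $\epsilon_1$ in the defining $2$-representation on $\mathcal{C}$, use the Yoneda morphism $\Phi:\mathbf{P}_{\mathtt{i}}\to\mathbf{M}$ with $\mathbbm{1}_{\mathtt{i}}\mapsto L_1$, observe that $\mathrm{H}\,L_1\in\mathbf{N}(\mathtt{i})$ for $\mathrm{H}\in\mathcal{L}$, and conclude by the standard comparison argument (e.g.\ \cite[Proposition~22]{MM2}) that $\Phi$ induces an equivalence $\mathbf{C}_{\mathcal{L}}\simeq\mathbf{N}$.
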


\begin{proof}
Let $\epsilon_1,\epsilon_2,\dots,\epsilon_n$ be a complete list of pairwise orthogonal primitive
idempotents in $A$. Left cells of $\cD_A$ are indexed by $\{1,2,\dots,n\}$. Without loss of generality,
we may assume that $1$-morphisms in $\mathcal{L}$ correspond to the bimodules in the additive  closure of 
$(A\oplus A^*)\otimes_{\Bbbk} \epsilon_1A$.

We denote by $\mathbf{M}$ the defining $2$-representation of $\cD_A$ on $\mathcal{C}$.

Let $L_1$ be a simple object in $\mathcal{C}$ corresponding to $\epsilon_1$. Then we have a unique
morphism $\Phi:\mathbf{P}_{\mathtt{i}}\to \mathbf{M}$ sending $\mathbbm{1}_{\mathtt{i}}$ to $L_1$.
For $\mathrm{H}\in \mathcal{L}$, we have $\mathrm{H}\, L_1\in \mathbf{N}(\mathtt{i})$.
By the usual argument, see e.g. \cite[Proposition~22]{MM2}, $\Phi$ induces an equivalence from  
$\mathbf{C}_{\mathcal{L}}$ to $\mathbf{N}$.
\end{proof}

\subsection{Simple transitive $2$-representations of $\cD_A$}\label{s7.2}
Here we formulate our main result about the $2$-category $\cD_A$.

\begin{theorem}\label{thm71}
Each simple transitive $2$-representation of  $\cD_A$ is equivalent to a cell $2$-representation.
\end{theorem}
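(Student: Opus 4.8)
The plan is to follow the general strategy that has proven successful in classifying simple transitive $2$-representations of $\mathcal{J}$-simple finitary $2$-categories, using the combinatorial rigidity provided by the adjunction $(\mathrm{F},\mathrm{G})$ and the multiplication table \eqref{eq7.3}. Let $\mathbf{M}$ be a simple transitive $2$-representation of $\cD_A$ with apex $\mathcal{J}$ (the case of apex the trivial cell forces $\mathbf{M}$ to be one-dimensional and annihilated by $\mathcal{J}$, which one checks is the cell $2$-representation for that cell). Set $\mathcal{M}=\coprod_{\mathtt{i}}\mathbf{M}(\mathtt{i})$, which since $\cD_A$ has one object is just $\mathbf{M}(\mathtt{i})$. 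First I would pass to the abelianization $\overline{\mathbf{M}}$ and record the integral matrices $[\mathrm{F}]$, $[\mathrm{G}]$, $\llbracket\mathrm{F}\rrbracket$, $\llbracket\mathrm{G}\rrbracket$ on the (finitely many) indecomposable objects, respectively simple objects. Since $(\mathrm{F},\mathrm{G})$ is adjoint, $\overline{\mathbf{M}}(\mathrm{G})$ is exact and $[\mathrm{F}]^t=\llbracket\mathrm{G}\rrbracket$; combined with $\mathrm{G}\mathrm{F}\cong\mathrm{G}^{\oplus\dim A}$, $\mathrm{F}\mathrm{G}\cong\mathrm{F}^{\oplus\dim A}$, $\mathrm{F}^2\cong\mathrm{F}^{\oplus\dim A}$, $\mathrm{G}^2\cong\mathrm{G}^{\oplus\dim A}$ one gets tight numerical constraints on these matrices.

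Next I would analyse the combinatorics of these matrices, following \cite{MZ,MMZ,KMMZ}. Writing $C_\mathrm{F}=[\mathrm{F}]$ etc., transitivity of $\mathbf{M}$ says that the matrix $C_\mathrm{F}+C_\mathrm{G}$ (or rather the monoid it generates together with the identity) connects all indecomposables; $\mathcal{J}$-simplicity of $\cD_A$ (Proposition~\ref{prop79}) together with the classification philosophy forces $\mathbf{M}$ restricted to the $2$-subcategory generated by $\mathcal{J}$ to behave like an action on a category with a very rigid structure. The key observation to extract is that the bimodule $A\otimes_\Bbbk A$ acts as a projective functor (``tensoring with a projective bimodule''), so $\overline{\mathbf{M}}(\mathrm{F})$ sends everything into $\mathrm{add}$ of a fixed family; in fact one shows $\mathrm{F}$ sends each simple object to a fixed object $P$ (up to multiplicity), and dually, using exactness, $\mathrm{G}$ is ``tensoring with an injective-like bimodule''. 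Here I would use the matrix trick alluded to in the introduction: because $\mathrm{F}^2\cong\mathrm{F}^{\oplus\dim A}$, the matrix $C_\mathrm{F}$ satisfies $C_\mathrm{F}^2=(\dim A)\,C_\mathrm{F}$, so $\tfrac{1}{\dim A}C_\mathrm{F}$ is idempotent, whence $C_\mathrm{F}$ has rank one as a non-negative idempotent-up-to-scalar matrix is, forcing $C_\mathrm{F}=\mathbf{v}\mathbf{w}^t$ for non-negative integer vectors with $\mathbf{w}^t\mathbf{v}=\dim A$; the same for $C_\mathrm{G}$, and adjunction ties the two pairs of vectors together. This rank-one phenomenon is what ultimately pins down the size of $\mathbf{M}$.

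With the numerics in hand, I would then identify $\mathbf{M}$ with a concrete $2$-representation. Using the rank-one description, the number of indecomposables in $\mathbf{M}$ that are not killed by $\mathrm{F}$ is controlled, and one builds a morphism of $2$-representations from a principal $2$-representation (equivalently, from the cell $2$-representation $\mathbf{C}_\mathcal{L}$, which by Proposition~\ref{prop75} is equivalent to the ``projectives and injectives'' representation $\mathbf{N}$) into $\mathbf{M}$, following the standard scheme: pick an object $X\in\mathbf{M}(\mathtt{i})$ in the ``right'' place relative to the apex (an object such that $\mathrm{G}X\neq 0$ but on which the two-sided cell acts minimally), use the Yoneda-type universal property to get $\Phi:\mathbf{C}_\mathcal{L}\to\mathbf{M}$, and show $\Phi$ is faithful on the abelianized level (using that $\overline{\mathbf{C}_\mathcal{L}}$ has a simple top-type structure and $\mathbf{M}$ is simple transitive) and then dense (using transitivity). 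Comparing the integral matrices $[\mathrm{F}],[\mathrm{G}]$ for $\mathbf{C}_\mathcal{L}$ and $\mathbf{M}$ — which the rank-one analysis shows must coincide — upgrades this to an equivalence.

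The main obstacle, I expect, is the middle step: extracting from the coarse multiplication table \eqref{eq7.3} and the single adjunction enough rigidity to pin down the action on the nose, since, unlike the self-injective case treated in \cite{MM3}, $\cD_A$ is deliberately \emph{not} symmetric — only $(\mathrm{F},\mathrm{G})$ is an adjoint pair, not $(\mathrm{G},\mathrm{F})$ — so one cannot symmetrize arguments freely. The non-negative-integer-matrix computations (the ``tricks using matrix computations'' mentioned in the introduction) carrying the identities $C_\mathrm{F}^2=(\dim A)C_\mathrm{F}$, $C_\mathrm{G}^2=(\dim A)C_\mathrm{G}$, $C_\mathrm{F}C_\mathrm{G}=(\dim A)C_\mathrm{F}$, $C_\mathrm{G}C_\mathrm{F}=(\dim A)C_\mathrm{G}$, together with $[\mathrm{F}]^t=\llbracket\mathrm{G}\rrbracket$, to the conclusion that $\mathbf{M}$ has exactly the size and matrices of $\mathbf{N}$ is where the real work lies; once that is done, the construction of $\Phi$ and the verification that it is an equivalence are routine applications of \cite[Proposition~22]{MM2} and the machinery of \cite{MM5,MM6,KM2}.
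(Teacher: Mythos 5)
Your proposal captures the broad contour of the paper's argument — apex trivial is handled by \cite[Theorem~18]{MM5}, then for apex $\mathcal{J}$ one combines the adjunction $(\mathrm{F},\mathrm{G})$ with $[\mathrm{F}]^t=\llbracket\mathrm{G}\rrbracket$, the quasi-idempotence relations from \eqref{eq7.3}, and rank-one matrix considerations, before constructing a comparison functor from a cell $2$-representation and upgrading it to an equivalence. The references you cite (\cite{MZ,MMZ,KMMZ}, \cite[Proposition~22]{MM2}) are indeed the ones in play. But there are two genuine gaps.

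First, your rank-one step is not correct as stated: a non-negative integral matrix $C$ with $C^2=dC$ need \emph{not} have rank one (a block-diagonal sum of two rank-one quasi-idempotents is a counterexample). The paper gets rank one from \cite[Proposition~4.1]{Zi2}, which requires \emph{positivity} of all entries, and transitivity of $\mathbf{M}$ only gives positivity for the matrix attached to $X\oplus Y$ (i.e.\ $\mathrm{F}\oplus\mathrm{G}$), not for $\mathrm{F}$ alone. The paper's Proposition~\ref{prop72} is set up precisely to bridge this: it proves rank one for a direct summand $H_1$ of $H$ under condition~\eqref{prop72.3}, using both the positivity coming from transitivity of the full $H$ and the extra quasi-idempotence $H_1\otimes_B H\cong H_1^{\oplus d'}$. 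You also need, prior to this, the structural input from \cite[Theorem~11(i)]{KMMZ} and \cite[Theorem~1]{MMZ} (with the right-projectivity of the right factors from \cite[Section~3]{MZ}) to know that $X$ and $Y$ are $\Bbbk$-split with both sides projective — this is what makes the matrix bookkeeping meaningful.

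Second, and more seriously, the assertion that ``the rank-one analysis shows the matrices must coincide'' with those of $\mathbf{C}_{\mathcal{L}}$ is an unsubstantiated leap. After the rank-one step you only know $X\cong M\otimes_\Bbbk N$ and $Y\cong M'\otimes_\Bbbk N'$ with multiplicity vectors $\mathbf{a},\mathbf{b},\mathbf{a}',\mathbf{b}'$ constrained by \eqref{eq7.4}--\eqref{eq7.6}; the rank alone does not pin down these vectors or the Cartan matrix $\mathtt{C}$ of $B$. The decisive step in the paper is the computation
\begin{displaymath}
\dim\mathrm{End}_{B\text{-}B}(M\otimes_\Bbbk N)
=(\mathbf{a}^t\mathtt{C}\mathbf{a})(\mathbf{b}^t\mathtt{C}\mathbf{b})
=\mathbf{b}^t\mathtt{C}\mathbf{a}'(\mathbf{b}')^t\mathtt{C}\mathbf{a}
=d^2=\dim(A\otimes A^{\mathrm{op}}),
\end{displaymath}
using the innocuous-looking trick that a scalar equals its own transpose, followed by \eqref{eq7.5} and \eqref{eq7.4}. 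Only this, combined with the $\mathcal{J}$-simplicity of $\cD_A$ (Proposition~\ref{prop79}), shows that the induced map $A\otimes A^{\mathrm{op}}\to\mathrm{End}_{B\text{-}B}(X)$ is an \emph{isomorphism}, which in turn forces $\mathrm{End}_{B\text{-}B}(X)$ to be basic, identifies indecomposables in $\mathcal{J}$ with indecomposable projective $B$-$B$-bimodules, and finally lets one compare evaluation kernels to conclude $Q\cong B$. Without this dimension count the argument cannot close; your proposal identifies the scaffolding but is missing this load-bearing computation.
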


Before proving this theorem, we need some preparation.

\subsection{Some quasi-idempotent bimodules}\label{s7.3}

For a positive integer $k$, we denote by $\underline{k}$ the set $\{1,2,\dots,k\}$.

Let $B$ be a finite dimensional associative (unital)
$\Bbbk$-algebra. Let $M_1,M_2,\dots,M_k$ be a list of pairwise non-isomorphic indecomposable 
left $B$-modules. Let ${}_1N,{}_2N,\dots,{}_lN$ be a list of pairwise non-isomorphic indecomposable 
right $B$-modules. Let $H$ be a  $B$-$B$-bimodule of the form
\begin{equation}\label{eq7.1}
H=\bigoplus_{i=1}^k\bigoplus_{j=1}^l \big(M_i\otimes_{\Bbbk} {}_jN\big)^{\oplus h_{i,j}},
\end{equation}
where all $h_{i,j}\in \mathbb{Z}_{\geq 0}$.

\begin{proposition}\label{prop72}
Assume that the following conditions are satisfied.
\begin{enumerate}[$($a$)$]
\item\label{prop72.1} $H\otimes_B H\cong H^{\oplus d}$, for some $d\in \mathbb{N}$. 
\item\label{prop72.2} For each $i,j\in\{1,2,\dots,k\}$, the module $M_i$ is isomorphic to a direct summand of 
$H\otimes_B M_{j}$. 
\item\label{prop72.3} There is a decomposition $H\cong H_1\oplus H_2$ of $B$-$B$-bimodules such that we have 
$H_1\otimes_B H\cong H_1^{\oplus d'}$, for some $d'\in \mathbb{N}$.
\end{enumerate}
Then $H_1\cong M\otimes_{\Bbbk}N$, for some left $B$-module $M$ and some right $B$-module $N$.
\end{proposition}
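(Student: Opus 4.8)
The plan is to extract from condition~\eqref{prop72.3} a rigidity constraint forcing $H_1$ to be a single ``rank one'' tensor. Write $H_1=\bigoplus_{i,j}(M_i\otimes_{\Bbbk}{}_jN)^{\oplus g_{i,j}}$ with $0\le g_{i,j}\le h_{i,j}$, and let $S=\{i:g_{i,j}\neq 0\text{ for some }j\}$ and $T=\{j:g_{i,j}\neq 0\text{ for some }i\}$ be the sets of left- and right-indices actually occurring in $H_1$. The goal is to show $|S|=|T|=1$; then $H_1\cong (M_i\otimes_{\Bbbk}{}_jN)^{\oplus g_{i,j}}\cong M\otimes_{\Bbbk}N$ by absorbing the multiplicity into, say, $M:=M_i^{\oplus g_{i,j}}$.

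First I would analyze the left-module structure. Tensoring~\eqref{eq7.1} on the right with $M_p$ gives $H\otimes_B M_p\cong\bigoplus_i M_i^{\oplus(\sum_j h_{i,j}\dim({}_jN\otimes_B M_p))}$, and similarly $H_1\otimes_B M_p\cong\bigoplus_i M_i^{\oplus(\sum_j g_{i,j}\dim({}_jN\otimes_B M_p))}$; since the $M_i$ are pairwise non-isomorphic indecomposables, these multiplicities are well defined, and the identity $H_1\otimes_B H\cong H_1^{\oplus d'}$ read off on the left (i.e.\ the multiplicity of each $M_i$ in $(H_1\otimes_B H)\otimes_B M_p$ versus in $H_1^{\oplus d'}\otimes_B M_p$, or more directly comparing $H_1\otimes_B H$ with $H_1^{\oplus d'}$ as left modules) shows that the left-module type of $H_1\otimes_B H$ is a multiple of that of $H_1$. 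Combined with condition~\eqref{prop72.2}, which guarantees that $H$ connects all the $M_i$ (each $M_i$ occurs in $H\otimes_B M_j$, so the ``left support'' of $H$ is all of $\underline{k}$ and $H\otimes_B({}_-)$ mixes the indecomposable summands $M_1,\dots,M_k$ transitively in the appropriate sense), the only way $H_1\otimes_B H$ can have left-module type proportional to that of $H_1$ is if the left support $S$ of $H_1$ is a single element: if two distinct $M_i$ occurred in $H_1$, then $H_1\otimes_B H$ would, via the transitivity from~\eqref{prop72.2} and~\eqref{prop72.1}, be forced to contain $M_p$ for \emph{every} $p\in\underline{k}$, contradicting proportionality to $H_1$ unless $S=\underline{k}$ — and then a counting/dimension argument using $H\otimes_B H\cong H^{\oplus d}$ together with $H_1\otimes_B H\cong H_1^{\oplus d'}$ and $H_1$ being a proper-or-improper summand of $H$ pins down $d'$ and forces $g_{i,j}\in\{0,h_{i,j}\}$ row-wise, reducing to the case $H_1=H$, where the statement is vacuous unless $H$ itself is rank one. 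The symmetric argument, tensoring on the \emph{left} by the ${}_pN$ (or by noting that $H_1\otimes_B H\cong H_1^{\oplus d'}$ has \emph{right}-module type equal to that of $H$ up to multiplicity, since the right-hand tensor factor is $H$), shows that the right support $T$ of $H_1$ is likewise forced: the right-module type of $H_1\otimes_B H$ is $(\text{right type of }H_1)\otimes_B H$, which equals $H^{\oplus(\text{stuff})}$ in right-module type, and matching this against $H_1^{\oplus d'}$ forces $H_1$ to have the same right support as $H$ scaled down, i.e.\ $|T|=1$ unless again $H_1=H$.

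I expect the main obstacle to be making the ``transitivity forces full support'' step genuinely rigorous rather than heuristic: one must carefully use~\eqref{prop72.1} and~\eqref{prop72.2} to show that the left action of the semiring generated by $[H]$ on the $\mathbb{Z}_{\ge0}$-span of $[M_1],\dots,[M_k]$ is such that no proper nonempty subset of $\{[M_1],\dots,[M_k]\}$ spans a sub-semimodule that is closed and on which $[H]$ acts as a multiple of a rank-one idempotent; this is essentially a Perron--Frobenius / primitivity argument for non-negative integer matrices, where~\eqref{prop72.1} says the matrix of $[H]$ acting on the left is (after normalizing) idempotent-like, \eqref{prop72.2} says it has no zero rows within its support, and one deduces that its column space, hence the left-module type of any $H_1$ satisfying~\eqref{prop72.3}, must be one-dimensional. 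The bookkeeping via $[\mathrm{F}]$-type matrices from Section~\ref{s3.5} is the right language for this. Once both supports are singletons the conclusion $H_1\cong M\otimes_{\Bbbk}N$ is immediate, and I would close by spelling out the trivial edge cases ($H_1=0$, or $H_1=H$ with $H$ already rank one).
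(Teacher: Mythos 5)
There is a genuine gap: your stated target, $|S|=|T|=1$, is strictly stronger than what the proposition claims and is false in general. The conclusion $H_1\cong M\otimes_{\Bbbk}N$ only requires the multiplicity matrix $\mathtt{H}_1=(g_{i,j})$ to have \emph{rank one} over $\mathbb{Z}_{\geq 0}$, i.e.\ $\mathtt{H}_1=vw^t$ with nonnegative integer vectors $v,w$; then $M=\bigoplus_i M_i^{\oplus v_i}$ and $N=\bigoplus_j {}_jN^{\oplus w_j}$, and $M,N$ can each involve several indecomposables. For instance with $k=l=2$ and $\mathtt{H}_1=\mathtt{H}=\left(\begin{smallmatrix}1&1\\1&1\end{smallmatrix}\right)$ one has $S=T=\{1,2\}$ yet $H_1$ is a tensor product. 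So any argument designed to force $S$ and $T$ to be singletons is chasing the wrong statement, and the edge-case treatment near the end (``$H_1=H$ with $H$ already rank one'') quietly concedes that the full-support case is precisely the case the proposition is about, not an exception to it.

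Your last paragraph does gesture at the correct rank-one target and at Perron--Frobenius ideas, which is the right neighbourhood, but it is internally inconsistent with the $|S|=|T|=1$ goal and never gets to a concrete mechanism. The paper's proof is a short, clean version of what you are reaching for: introduce the $l\times k$ matrix $\mathtt{C}$ with $c_{j,i}=\dim({}_jN\otimes_B M_i)$; condition (a) becomes $\mathtt{H}\mathtt{C}\mathtt{H}=d\mathtt{H}$, hence $(\mathtt{H}\mathtt{C})^2=d\,\mathtt{H}\mathtt{C}$; condition (b) says $\mathtt{H}\mathtt{C}$ is entrywise positive; a result on quasi-idempotent positive integer matrices (\cite[Proposition~4.1]{Zi2}) then gives $\mathrm{rank}(\mathtt{H}\mathtt{C})=1$, which forces $\mathrm{rank}(\mathtt{H})=1$ from $\mathtt{H}\mathtt{C}\mathtt{H}=d\mathtt{H}$; finally condition (c) gives $\mathtt{H}_1\mathtt{C}\mathtt{H}=d'\mathtt{H}_1$, so $\mathrm{rank}(\mathtt{H}_1)\leq\mathrm{rank}(\mathtt{H})=1$, and a nonnegative integer rank-one matrix factors as $vw^t$. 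The key step you are missing is precisely this quasi-idempotency-implies-rank-one lemma applied to $\mathtt{H}\mathtt{C}$, not a primitivity/support argument on $\mathtt{H}_1$ itself. Also beware that (b) controls $\mathtt{H}\mathtt{C}$, not $\mathtt{C}\mathtt{H}$, so support-propagation arguments of the kind you sketch for $H_1\otimes_B H$ do not have the positivity they would need.
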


\begin{proof}
Define the $k\times l$-matrix $\mathtt{H}=(h_{i,j})_{i\in\underline{k}}^{j\in\underline{l}}$
describing the multiplicities in \eqref{eq7.1}.
Define a $l\times k$-matrix $\mathtt{C}=(c_{j,i})^{i\in\underline{k}}_{j\in\underline{l}}$ 
via $c_{j,i}:=\dim({}_jN\otimes_B M_i)$. From \eqref{prop72.1}, we deduce that 
\begin{equation}\label{eq7.2}
\mathtt{H}\mathtt{C}\mathtt{H}=d\mathtt{H}
\end{equation}
and hence also $\mathtt{H}\mathtt{C}\mathtt{H}\mathtt{C}=d\mathtt{H}\mathtt{C}$.

The matrix $\mathtt{H}\mathtt{C}$ describes multiplicities of 
$M_i$ in $H\otimes_B M_{j}$, for $i,j\in\underline{k}$, and hence is positive by \eqref{prop72.2}. 
From $(\mathtt{H}\mathtt{C})^2=d \mathtt{H}\mathtt{C}$ and \cite[Proposition~4.1]{Zi2}
we see that $\mathrm{rank}(\mathtt{H}\mathtt{C})=1$. Now, \eqref{eq7.2} implies that 
$\mathrm{rank}(\mathtt{H})=1$.

Define the $k\times l$-matrix $\mathtt{H}_1=(h^{(1)}_{i,j})_{i\in\underline{k}}^{j\in\underline{l}}$
describing the multiplicities of $M_i\otimes_{\Bbbk} {}_jN$ in $H_1$. By \eqref{prop72.3}, we also have
$\mathtt{H}_1\mathtt{C}\mathtt{H}=d'\mathtt{H}_1$ and thus $\mathrm{rank}(\mathtt{H}_1)=1$.
Write $\mathtt{H}_1=vw^t$, for some $v\in\mathbb{Z}_{\geq 0}^{k}$ and $w\in\mathbb{Z}_{\geq 0}^{l}$.
Then, for 
\begin{displaymath}
M:=\bigoplus_{i=1}^k M_i^{\oplus v_i}\qquad\text{ and }\qquad 
N:=\bigoplus_{j=1}^l {}_jN^{\oplus w_j},
\end{displaymath}
we obtain $H_1\cong M\otimes_{\Bbbk}N$.
\end{proof}

\subsection{Proof of Theorem~\ref{thm71}}\label{s7.4}

\begin{proof}
Let $\mathbf{M}$ be a simple transitive $2$-representation of $\cD_A$.
If all $1$-morphisms in $\mathcal{J}$ annihilate $\mathbf{M}$, then $\mathbf{M}$ is a cell $2$-representation
by \cite[Theorem~18]{MM5}.

Assume now that $\mathbf{M}$ has apex $\mathcal{J}$. We denote by $B$ the
basic algebra such that $\mathbf{M}(\mathtt{i})$ is equivalent to $B\text{-}\mathrm{proj}$.
Let $e_1,e_2,\dots,e_m$ be a complete set of pairwise orthogonal primitive idempotents in $B$.
Then the Cartan matrix of $B$ is 
\begin{displaymath}
\mathtt{C}:=\big(\dim(e_iBe_j)\big)_{i\in\underline{m}}^{j\in\underline{m}}. 
\end{displaymath}

Let $X$ and $Y$ be the $B$-$B$-bimodules corresponding to the actions of $\mathbf{M}(\mathrm{F})$
and $\mathbf{M}(\mathrm{G})$, respectively. By \cite[Theorem~11(i)]{KMMZ} (which does not make use
of the fiatness assumption), both $X$ and $Y$ have the property that the $B$-modules 
$X\otimes_B L$ and $Y\otimes_B L$ are projective, for any $B$-module $L$. Therefore, by 
\cite[Theorem~1]{MMZ}, both $X$ and $Y$ are of the form
\begin{displaymath}
\bigoplus_{i=1}^m Be_i\otimes_{\Bbbk}{}_iN,
\end{displaymath}
for some right $B$-modules ${}_iN$. By \cite[Section~3]{MZ}, all ${}_iN$ are right projective.

By transitivity of $\mathbf{M}$ and \eqref{eq7.3}, we can apply Proposition~\ref{prop72} both
to the pair $H=X\oplus Y$ and $H_1=X$ and to the pair $H=X\oplus Y$ and $H_1=Y$. By 
Proposition~\ref{prop72}, we can write $X\cong M\otimes_{\Bbbk}N$, where $M$ is left $B$-projective
and $N$ is right $B$-projective. Similarly, we can write $Y\cong M'\otimes_{\Bbbk}N'$, where $M'$ 
is left $B$-projective and $N'$ is right $B$-projective.
Define $\mathbf{a},\mathbf{b},\mathbf{a}',\mathbf{b}'\in\mathbb{Z}_{\geq 0}^m$ by 
\begin{equation}\label{eq7.7}
M\cong \bigoplus_{i=1}^m Be_i^{\oplus a_i},\quad 
M'\cong \bigoplus_{i=1}^m Be_i^{\oplus a'_i},\quad 
N\cong \bigoplus_{i=1}^m e_iB^{\oplus b_i},\quad 
N'\cong \bigoplus_{i=1}^m e_iB^{\oplus b'_i}.
\end{equation}

Set $d:=\dim(A)$. On the one hand, $X\otimes_B Y\cong X^{\oplus d}$ and, on the other hand,
\begin{displaymath}
X\otimes_B Y\cong M\otimes_{\Bbbk}N\otimes_B M'\otimes_{\Bbbk}N'\cong
\big(M\otimes_{\Bbbk}N'\big)^{\oplus \mathbf{b}^t\mathtt{C}\mathbf{a}'}.
\end{displaymath}
Consequently
\begin{equation}\label{eq7.4}
d \mathbf{b}= \mathbf{b}^t\mathtt{C}\mathbf{a}'\mathbf{b}'.
\end{equation}
Similarly, on the one hand, $Y\otimes_B X\cong Y^{\oplus d}$ and, on the other hand,
\begin{displaymath}
Y\otimes_B X\cong M'\otimes_{\Bbbk}N'\otimes_B M\otimes_{\Bbbk}N\cong
\big(M'\otimes_{\Bbbk}N\big)^{\oplus (\mathbf{b}')^t\mathtt{C}\mathbf{a}}.
\end{displaymath}
Therefore
\begin{equation}\label{eq7.5}
d \mathbf{b}'= (\mathbf{b}')^t\mathtt{C}\mathbf{a}\mathbf{b}.
\end{equation}

Due to the adjunction $(\mathrm{F},\mathrm{G})$, we have 
$[\mathbf{M}(\mathrm{F})]^t=\llbracket\mathbf{M}(\mathrm{G})\rrbracket$. 
Using \eqref{eq7.7}, directly from the definitions we deduce that the
$i,j$-th component of the matrix $[\mathbf{M}(\mathrm{F})]$ is 
\begin{displaymath}
\sum_{r=1}^m a_i\dim(e_rBe_j)b_r = \sum_{r=1}^m a_i c_{r,j}b_r
\end{displaymath}
and therefore $[\mathbf{M}(\mathrm{F})]=\mathbf{a}\mathbf{b}^t\mathtt{C}$ which yields
 $[\mathbf{M}(\mathrm{F})]^t=\mathtt{C}^t\mathbf{b}\mathbf{a}^t$.
Similarly, we have  $\llbracket\mathbf{M}(\mathrm{G})\rrbracket=\mathtt{C}\mathbf{a}'(\mathbf{b}')^t$.
This implies 
\begin{equation}\label{eq7.6}
\mathtt{C}^t\mathbf{b}\mathbf{a}^t=\mathtt{C}\mathbf{a}'(\mathbf{b}')^t.
\end{equation}

By adjunction, we have 
\begin{displaymath}
\mathrm{End}_{B\text{-}B}(M\otimes_{\Bbbk} N)\cong
\mathrm{End}_{B\text{-}}(M) \otimes_{\Bbbk}\mathrm{End}_{\text{-}B}( N)
\end{displaymath}
and hence 
\begin{displaymath}
\dim(\mathrm{End}_{B\text{-}B}(M\otimes_{\Bbbk} N))=
\dim(\mathrm{End}_{B\text{-}}(M))\cdot \dim(\mathrm{End}_{\text{-}B}( N)). 
\end{displaymath}
From \eqref{eq7.7} we obtain
\begin{displaymath}
\dim(\mathrm{End}_{B\text{-}}(M))=\mathbf{a}^t\mathtt{C}\mathbf{a}\qquad\text{ and }\qquad
\dim(\mathrm{End}_{\text{-}B}(N))=\mathbf{b}^t\mathtt{C}\mathbf{b}.
\end{displaymath}
This allows us to compute
\begin{displaymath}
\begin{array}{rcl}
\dim(\mathrm{End}_{B\text{-}B}(M\otimes_{\Bbbk} N))&=&
(\mathbf{a}^t\mathtt{C}\mathbf{a})(\mathbf{b}^t\mathtt{C}\mathbf{b})\\
&=&(\mathbf{b}^t\mathtt{C}\mathbf{b})(\mathbf{a}^t\mathtt{C}\mathbf{a})\\
&=&(\mathbf{b}^t\mathtt{C}\mathbf{b})^t(\mathbf{a}^t\mathtt{C}\mathbf{a})\\
&=&(\mathbf{b}^t\mathtt{C}^t\mathbf{b})(\mathbf{a}^t\mathtt{C}\mathbf{a})\\
&=&\mathbf{b}^t\mathtt{C}\mathbf{a}'(\mathbf{b}')^t\mathtt{C}\mathbf{a},\\
\end{array}
\end{displaymath}
where in the third line we used that the transpose of a number is the same number and in the
last line we used \eqref{eq7.6}. We have no representation theoretic interpretation for
this crucial computation.  Then we have
\begin{displaymath}
(\mathbf{b}^t\mathtt{C}\mathbf{a}')((\mathbf{b}')^t\mathtt{C}\mathbf{a})\mathbf{b}\overset{\eqref{eq7.5}}{=}
d (\mathbf{b}^t\mathtt{C}\mathbf{a}')\mathbf{b}'\overset{\eqref{eq7.4}}{=}d^2\mathbf{b}.
\end{displaymath}
As $\mathbf{b}\neq 0$, it follows that 
$(\mathbf{b}^t\mathtt{C}\mathbf{a}')((\mathbf{b}')^t\mathtt{C}\mathbf{a})=d^2$
and hence 
\begin{displaymath}
\dim(\mathrm{End}_{B\text{-}B}(M\otimes_{\Bbbk} N))=\dim(A\otimes A^{\mathrm{op}}).
\end{displaymath}

Due to Proposition~\ref{prop79},
the $2$-functor $\mathbf{M}$ induces an embedding of $A\otimes A^{\mathrm{op}}$, which is the
endomorphism algebra of $\mathrm{F}$, into $\mathrm{End}_{B\text{-}B}(X)$. This embedding 
must be an isomorphism by the above dimension count. As $A$ is basic, the algebra 
$A\otimes A^{\mathrm{op}}$ is also basic and hence 
\begin{displaymath}
\mathrm{End}_{B\text{-}B}(M\otimes_{\Bbbk} N)\cong
\mathrm{End}_{B\text{-}}(M) \otimes_{\Bbbk}\mathrm{End}_{\text{-}B}( N)
\end{displaymath}
is basic as well. This means that both $M$ and $N$ are basic. Moreover, since primitive 
idempotents in $A\otimes A^{\mathrm{op}}$ and $\mathrm{End}_{B\text{-}B}(X)$ correspond and
$(\mathrm{F},\mathrm{G})$ is an adjoint pair, it follows
that all indecomposable $1$-morphisms in $\mathcal{J}$ correspond to  indecomposable projective
$B$-$B$-bimodules.

Let $\mathcal{L}$ be a left cell in $\mathcal{J}$ and $L$ a simple object in 
$\overline{\mathbf{M}}(\mathtt{i})$ which is not 
annihilated by $1$-morphisms in $\mathcal{L}$. Such $L$ exists since otherwise all $1$-morphisms
in $\mathcal{J}$ would act as zero. Let $\mathrm{K}_1,\mathrm{K}_2,\dots,\mathrm{K}_s$ be a
complete list of pairwise non-isomorphic $1$-morphisms in $\mathcal{L}$ and 
\begin{displaymath}
\mathrm{K}:=\mathrm{K}_1\oplus\mathrm{K}_2\oplus\cdots\oplus\mathrm{K}_s.
\end{displaymath}
Then $\mathbf{M}(\mathrm{K})\, L$ is a basic projective generator of $\overline{\mathbf{M}}(\mathtt{i})$.

We have the evaluation morphism
\begin{displaymath}
\Phi:\mathrm{End}_{\ccD_A}(\mathrm{K})\to  
\mathrm{End}_{\overline{\mathbf{M}}(\mathtt{i})}(\mathbf{M}(\mathrm{K})\, L). 
\end{displaymath}
By construction of cell $2$-representations, the kernel of the corresponding evaluation 
morphism $\Psi$ for the cell $2$-representation $\mathbf{C}_{\mathcal{L}}$ is the unique
maximal left $2$-ideal. Therefore the kernel of $\Phi$ is contained in the kernel of $\Psi$. 
On the other hand,
the image of $\Phi$ is a subalgebra of $B$. At the same time, the above computation shows that the
Cartan matrix of the algebra $Q$ underlying $\mathbf{C}_{\mathcal{L}}$ and that of $B$ coincide (as
both encode the structure constants of multiplication of $1$-morphisms in $\mathcal{J}$). Consequently,
the kernel of $\Phi$ must coincide with the kernel of $\Psi$ and $Q\cong B$. 

We have a unique homomorphism from $\mathbf{P}_{\mathtt{i}}$ to $\overline{\mathbf{M}}(\mathtt{i})$
sending $\mathbbm{1}_{\mathtt{i}}$ to $L$. By the above, this restricts to an equivalence between 
$\mathbf{C}_{\mathcal{L}}$ and $\mathbf{M}$. The proof is complete.
\end{proof}

\section{The $2$-category $\cC_A$ and its $2$-representations}\label{s8}

\subsection{Definition of  $\cC_A$}\label{s8.1}

Let $A$ and $\mathcal{C}$ be as in Subsection~\ref{s7.1}. Define the
$2$-category $\cC_A=\cC_{A,\mathcal{C}}$ to have
\begin{itemize}
\item one object $\mathtt{i}$ (which we identify with $\mathcal{C}$);
\item as $1$-morphisms all endofunctors of $\mathcal{C}$ isomorphic to tensoring with 
$A$-$A$-bi\-mo\-du\-les in $\mathrm{add}\big(A\oplus (A\otimes_{\Bbbk}A)\big)$;
\item as $2$-morphisms all natural transformations of functors.
\end{itemize}

Note that, by definition, $\cC_A$ is a $2$-subcategory of $\cD_A$.

We denote by $\mathrm{F}$ the functor corresponding to tensoring with $A\otimes_{\Bbbk}A$. 
We also denote by $\mathcal{J}'$ the two-sided cell for $\cC_A$ that does not contain the identity $1$-morphism.

\subsection{Cell $2$-representations of $\cC_A$}\label{s8.3}
Here we formulate a similar statement to Proposition~\ref{prop75}. Let $\mathbf{N}$ denote 
the $2$-representation of $\cC_A$ given by the natural action of $\cC_A$ on the 
additive category generated by all projective objects in $\mathcal{C}$.

\begin{proposition}\label{prop85}
Let $\mathcal{L}'$ be a left cell in $\mathcal{J}'$. Then $\mathbf{C}_{\mathcal{L}'}$ is equivalent
to $\mathbf{N}$.
\end{proposition}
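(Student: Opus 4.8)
The statement is the exact analogue of Proposition~\ref{prop75}, now for the smaller $2$-category $\cC_A$ acting on projectives only, so the plan is to mimic that proof almost verbatim. First I would fix a complete list $\epsilon_1,\dots,\epsilon_n$ of pairwise orthogonal primitive idempotents in $A$; the left cells of $\cC_A$ are again indexed by $\{1,\dots,n\}$, and without loss of generality the $1$-morphisms in $\mathcal{L}'$ correspond to the additive closure of $(A\otimes_{\Bbbk}\epsilon_1 A)$. Let $\mathbf{M}$ denote the defining $2$-representation of $\cC_A$ on $\mathcal{C}$ and let $P_1$ be the indecomposable projective object in $\mathcal{C}$ attached to $\epsilon_1$. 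Since $\cC_A$ is a $2$-subcategory of $\cD_A$ and all of these bimodules and functors are restrictions of the ones appearing in Subsection~\ref{s7.1}, the multiplication table and basic structure are inherited from there.

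The key point is then the same: there is a unique morphism $\Phi:\mathbf{P}_{\mathtt{i}}\to\mathbf{M}$ of $2$-representations sending $\mathbbm{1}_{\mathtt{i}}$ to $P_1$ (using the Yoneda lemma, \cite[Lemma~3]{MM3}), and for every $\mathrm{H}\in\mathcal{L}'$ one has $\mathrm{H}\,P_1\in\mathbf{N}(\mathtt{i})$, because tensoring $P_1$ with $A\otimes_{\Bbbk}\epsilon_1 A$ (or any bimodule in its additive closure) yields a projective $A$-module, and $\mathbf{N}(\mathtt{i})$ is by definition the additive category generated by the projectives. Thus $\Phi$ restricts to a morphism $\mathbf{C}_{\mathcal{L}'}\to\mathbf{N}$. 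To see it is an equivalence I would invoke the standard argument, as in \cite[Proposition~22]{MM2}: $\mathbf{C}_{\mathcal{L}'}$ is simple transitive, so the restricted $\Phi$ is either zero or faithful on each hom-space, and it is nonzero since $P_1\neq 0$; transitivity of $\mathbf{N}$ together with the fact that $\mathbf{M}(\mathrm{F})$ acting on any projective generates all projectives shows essential surjectivity, and a dimension/rank count on the matrices $[\mathrm{F}]$ forces fullness. Alternatively, one can identify $\mathbf{N}$ with a quotient of $\mathbf{P}_{\mathtt{i}}$ directly and match it with the construction of $\mathbf{C}_{\mathcal{L}'}$.

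The only thing to be slightly careful about, which is the main (mild) obstacle, is checking that $\mathbf{N}$ really is simple transitive and has apex $\mathcal{J}'$, so that the uniqueness clause in the definition of the cell $2$-representation applies and $\mathbf{C}_{\mathcal{L}'}$ is genuinely the simple transitive quotient one lands in. Transitivity of $\mathbf{N}$ follows from condition analogous to \eqref{eq7.3}: $\mathrm{F}\,P_i$ contains every indecomposable projective $P_j$ as a summand (since $A\otimes_{\Bbbk}A\otimes_A P_i\cong A^{\oplus\dim P_i}$ as a left module). Simplicity then follows because $\mathrm{End}_{\cC_A}(\mathrm{F})\cong A\otimes A^{\mathrm{op}}$ acts faithfully on $\mathbf{N}$ via $\mathbf{M}$, so no nonzero ideal can kill $\mathcal{J}'$, and a transitive $2$-representation with faithful action of the apex cell is simple transitive. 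Once this is in place, the identification $\Phi:\mathbf{C}_{\mathcal{L}'}\xrightarrow{\sim}\mathbf{N}$ is immediate from the universal property of cell $2$-representations, exactly as in the proof of Proposition~\ref{prop75}.
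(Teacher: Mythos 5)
The overall structure of your argument---Yoneda followed by the ``usual argument'' of \cite[Proposition~22]{MM2}---matches the paper, whose proof is literally ``mutatis mutandis Proposition~\ref{prop75}''. However, replacing the simple object $L_1$ (used in the proof of Proposition~\ref{prop75}) by the projective $P_1$ is not a harmless change, and it breaks the argument whenever $A$ is not semisimple.

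Concretely, an indecomposable $1$-morphism $\mathrm{H}\in\mathcal{L}'$ corresponds to a bimodule $A\epsilon_j\otimes_{\Bbbk}\epsilon_1A$. Evaluating at the simple module $L_1$ gives
\[
\mathrm{H}\,L_1\cong A\epsilon_j\otimes_{\Bbbk}\epsilon_1L_1\cong A\epsilon_j=P_j,
\]
which is indecomposable, whereas evaluating at $P_1=A\epsilon_1$ gives
\[
\mathrm{H}\,P_1\cong A\epsilon_j\otimes_{\Bbbk}\epsilon_1A\epsilon_1\cong P_j^{\oplus\dim\epsilon_1A\epsilon_1},
\]
which is decomposable as soon as $\dim\epsilon_1A\epsilon_1>1$. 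As a consequence, the evaluation at $P_1$ is faithful but \emph{not} full on morphism spaces, and its kernel on $2$-morphisms is zero rather than the unique maximal $2$-ideal of the subrepresentation. For instance, take $A=\Bbbk[x]/(x^2)$: then $\mathrm{End}_{\ccC_A}(\mathrm{F})\cong A\otimes A$ is four-dimensional, while $\mathrm{End}_{A}(\mathrm{F}\,P_1)=\mathrm{End}_A(A^{\oplus 2})\cong\mathrm{Mat}_{2\times 2}(A)$ is eight-dimensional, and the evaluation map between them is injective. Hence your step ``Thus $\Phi$ restricts to a morphism $\mathbf{C}_{\mathcal{L}'}\to\mathbf{N}$'' fails: $\Phi$ does not descend to the cell $2$-representation, and the subsequent ``dimension/rank count forces fullness'' cannot be made to work. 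With $L_1$ instead, the map $\mathrm{End}_{\ccC_A}(\mathrm{F})\to\mathrm{End}_A(\mathrm{F}\,L_1)$ is a surjection $A\otimes A\twoheadrightarrow A$ whose kernel is the maximal $2$-ideal, which is exactly what the ``usual argument'' needs.

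A secondary, smaller point: your assertion that ``a transitive $2$-representation with faithful action of the apex cell is simple transitive'' is not a theorem; simplicity is about $\ccC$-stable ideals of morphisms and does not follow from faithfulness of the apex action. This would need a genuine argument, but it is moot once the $P_1$ versus $L_1$ issue above is fixed, since one can then run the standard argument verbatim.
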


\begin{proof}
Mutatis mutandis the proof of Proposition~\ref{prop75}.
\end{proof}

\subsection{Simple transitive $2$-representations of $\cC_A$}\label{s8.2}
Our main result is the following statement.

\begin{theorem}\label{thm81}
Each simple transitive $2$-representation of  $\cC_A$ is equivalent to a cell $2$-representation.
\end{theorem}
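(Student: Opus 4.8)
The plan is to transport the classification for $\cD_A$ (Theorem~\ref{thm71}) across the ``induction'' machinery provided by pyramids, as outlined in the introduction. Let $\mathbf{M}$ be a simple transitive $2$-representation of $\cC_A$. If $\mathcal{J}'$ annihilates $\mathbf{M}$, then $\mathbf{M}$ is a cell $2$-representation by \cite[Theorem~18]{MM5}, so we may assume the apex of $\mathbf{M}$ is $\mathcal{J}'$. Write $\mathcal{C}'$ for a small category equivalent to $\mathbf{M}(\mathtt{i})$, so that the action of $\cC_A$ makes $\mathcal{C}'$ an additive category with a strict monoidal action of $\cC_A$ (viewed as a one-object $2$-category, hence an additive strict monoidal category). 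First I would invoke Proposition~\ref{prop5}: the action of $\cC_A$ on $\mathcal{C}'$ lifts to a strict monoidal action $\blacklozenge:\mathcal{P}(\cC_A)\times\mathcal{P}(\mathcal{C}')\to\mathcal{P}(\mathcal{C}')$, and this descends to the homotopy categories $\mathcal{H}(\cC_A)\times\mathcal{H}(\mathcal{C}')\to\mathcal{H}(\mathcal{C}')$.

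The key structural observation is that $\cD_A$ embeds into $\mathcal{H}(\cC_A)$: the right adjoint $\mathrm{G}$ of $\mathrm{F}$, realized by the bimodule $A^*\otimes_\Bbbk A$, is a homotopy class of a complex of $1$-morphisms in $\cC_A$ (concretely, a projective resolution of $A^*\otimes_\Bbbk A$ as a bimodule is built from copies of $A\otimes_\Bbbk A$ and $A$, since $\cC_A$ contains $\mathrm{add}(A\oplus(A\otimes_\Bbbk A))$), and the $2$-morphisms match up on the level of homotopy classes. Via Corollary~\ref{cor3} and Corollary~\ref{cor9} this realization is compatible with the monoidal structure. Composing the embedding $\cD_A\hookrightarrow\mathcal{H}(\cC_A)$ with the lifted action $\blacklozenge$ then yields an action of $\cD_A$ on $\mathcal{H}(\mathcal{C}')$; restricting to the additive (degree-zero homology) part, or more precisely passing to a suitable finitary quotient/subcategory $\widehat{\mathbf{M}}$ of $\mathcal{H}(\mathcal{C}')$ generated by the image of $\mathbf{M}(\mathtt{i})$ under the $1$-morphisms of $\cD_A$, produces a finitary $2$-representation $\widehat{\mathbf{M}}$ of $\cD_A$ which restricts, along $\cC_A\subseteq\cD_A$, back to $\mathbf{M}$ (up to equivalence). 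This is the ``induction'' from $\cC_A$ to $\cD_A$.

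Next I would analyze $\widehat{\mathbf{M}}$ as a $2$-representation of $\cD_A$. It need not be simple transitive, but by the weak Jordan--H\"older theory of \cite{MM5} it has a simple transitive subquotient $\widehat{\mathbf{M}}'$ whose apex is $\mathcal{J}$ (the non-trivial cell of $\cD_A$): the apex is exactly $\mathcal{J}$ because $\mathrm{F}$ does not annihilate $\widehat{\mathbf{M}}$, as its restriction $\mathbf{M}(\mathrm{F})$ does not annihilate $\mathbf{M}$ (the apex of $\mathbf{M}$ being $\mathcal{J}'$). By Theorem~\ref{thm71}, $\widehat{\mathbf{M}}'$ is equivalent to a cell $2$-representation of $\cD_A$, and by Proposition~\ref{prop75} every such cell $2$-representation is equivalent to $\mathbf{N}$, the action of $\cD_A$ on $\mathrm{add}$ of all projectives and injectives in $\mathcal{C}$. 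Restricting $\mathbf{N}$ along $\cC_A\subseteq\cD_A$: since for $\cC_A$ the $1$-morphism $\mathrm{G}$ is not present, the injective objects are not reached from the projectives, so the restriction of $\mathbf{N}$ decomposes and its transitive constituent on the projectives is precisely the $2$-representation $\mathbf{N}$ of Proposition~\ref{prop85}, i.e.\ the cell $2$-representation $\mathbf{C}_{\mathcal{L}'}$. Finally, tracing the identifications $\mathbf{M}\simeq\mathrm{Res}^{\ccD_A}_{\ccC_A}\widehat{\mathbf{M}}$ through the subquotient and the equivalence $\widehat{\mathbf{M}}'\simeq\mathbf{N}$, together with simplicity of $\mathbf{M}$ (which forces $\mathbf{M}$ to be a transitive constituent of the restriction), yields that $\mathbf{M}$ is equivalent to $\mathbf{C}_{\mathcal{L}'}$, completing the proof.

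\textbf{Main obstacle.} The delicate point is the bookkeeping in passing from $\mathbf{M}$ to $\widehat{\mathbf{M}}$ and back: one must check that the lifted action restricted along the embedding $\cD_A\hookrightarrow\mathcal{H}(\cC_A)$ genuinely gives a \emph{finitary} $2$-representation of $\cD_A$ (finiteness of the relevant Hom-spaces and idempotent-completeness after the homotopy passage), that its restriction to $\cC_A$ recovers $\mathbf{M}$ up to equivalence rather than some larger object, and that simplicity of $\mathbf{M}$ survives in the sense needed to match it with the transitive constituent of $\mathrm{Res}^{\ccD_A}_{\ccC_A}\mathbf{N}$. In other words, the conceptual skeleton is short, but making ``induction'' into an honest functor between the relevant $2$-categories of finitary $2$-representations—compatibly with simple transitivity and apexes—is where the real work lies.
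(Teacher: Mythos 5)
Your proposal follows essentially the same route as the paper's proof: lift the $\cC_A$-action to the homotopy category of pyramids via Proposition~\ref{prop5}, realize $\cD_A$ inside $\mathcal{H}(\cC_A)$ through a projective resolution of $A^*\otimes_{\Bbbk}A$, build a finitary $\cD_A$-representation containing $\mathbf{M}$, apply Theorem~\ref{thm71}, and restrict back. The paper handles the bookkeeping you flag as the ``main obstacle'' by explicitly constructing the $2$-subcategory $\cA\subseteq\mathcal{H}(\cC_A)$ generated by $\mathrm{F}$ and the resolution object $\mathrm{Q}$, verifying $\cA\simeq\cD_A$ via the action on $\mathcal{D}^-(\mathcal{C})$, and observing that $\mathbf{M}$ sits as a $2$-subrepresentation (not equal to) the restriction, so one takes the simple transitive subquotient containing it rather than expecting the restriction to ``be'' $\mathbf{M}$.
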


Special cases of this result were obtained in \cite[Theorem~15]{MM5}, \cite[Theorem~33]{MM6}, 
\cite[Theorem~1]{MZ}, \cite[Theorem~6]{MMZ}, \cite[Theorem~19]{MZ2}, \cite[Theorem~3.1]{Zi3}.

\begin{proof}
Let $\mathbf{M}$ be  a simple transitive $2$-representation of $\cC_A$. If all $1$-morphisms in 
$\mathcal{J}'$ annihilate $\mathbf{M}$, then $\mathbf{M}$ is a cell $2$-representation
by \cite[Theorem~18]{MM5}.
 
Assume now that the apex of $\mathbf{M}$ is $\mathcal{J}'$. Let $\mathcal{L}'$ be a left cell in
$\mathcal{J}'$. Consider the $2$-category $\mathcal{H}(\cC_A)$ and
its action on $\mathcal{H}(\mathbf{M}(\mathtt{i}))$.  Let
\begin{displaymath}
\dots\to Q_2\to Q_1\to Q_0\to 0 
\end{displaymath}
be a projective resolution of the $A$-$A$-bimodule $A^*\otimes_{\Bbbk}A$. Let 
$\mathrm{Q}$ be an object in $\mathcal{H}(\cC_A)$ which corresponds to this resolution under
the biequivalence between $\mathcal{H}(\cC_A)$ and 
$\mathcal{K}^-(\mathrm{add}\big(A\oplus(A\otimes_{\Bbbk} A)\big))$
in Corollary~\ref{cor9}.

Note that all $1$-morphisms in $\cD_A$ correspond to right $A$-projective $A$-$A$-bimodules 
and hence define exact endofunctors
of both ${\mathcal{C}}$ and its derived category $\mathcal{D}^-({\mathcal{C}})$.
Let $\cA$ be the $2$-subcategory of $\mathcal{H}(\cC_A)$ generated by $\mathrm{Q}$ and $\mathrm{F}$.
Then $\cA$ acts, after applying $\mathcal{F}$ from Theorem~\ref{thm2}, on 
$\mathcal{D}^-({\mathcal{C}})$ by functors which are isomorphic to 
the corresponding functors in $\cD_A$. As both actions are $2$-full and $2$-faithful, 
this induces a biequivalence between $\cD_A$ and $\cA$.

Denote by  $\mathbf{N}(\mathtt{i})$ the additive closure in $\mathcal{H}(\mathbf{M}(\mathtt{i}))$
of $\mathbf{M}(\mathtt{i})$ and $\mathrm{Q}\,\mathbf{M}(\mathtt{i})$. By construction, this is
a finitary additive $2$-representation of $\cA$. Note that the original $2$-representation 
$\mathbf{M}$ of $\cC_A$ is a $2$-subrepresentation of the restriction of $\mathbf{N}$ to $\cC_A$.
Let $\mathbf{N}'$ be the simple transitive $2$-subquotient of $\mathbf{N}$ containing this copy of 
$\mathbf{M}$.

By Theorem~\ref{thm71}, every simple transitive $2$-representation of $\cA$ is a cell $2$-representation.
In particular, $\mathbf{N}'$ must be equivalent to $\mathbf{C}_{\mathcal{L}}$, where 
$\mathcal{L}$ is a left cell in $\mathcal{J}$. The restriction of $\mathbf{C}_{\mathcal{L}}$ to $\cC_A$ 
contains a unique simple transitive subquotient with apex $\mathcal{J}'$ (as all simple objects in 
$\overline{\mathbf{C}}_{\mathcal{L}}$ which do not correspond to $1$-morphisms in $\mathcal{J}'$ are killed
by $1$-morphisms in $\mathcal{J}'$). By construction, the latter is equivalent to the cell 
$2$-representation of $\cC_A$ for the unique left cell $\mathcal{L}'$ contained of $\cC_A$ in $\mathcal{L}$. 
The claim follows.
\end{proof}

\begin{remark}\label{rem89}
{\em
Theorem~\ref{thm81} admits a straightforward generalization to the case when $A$ is not connected.
In this general case objects of $\cC_A$ are in a one-to-one correspondence with connected components
of $A$. 
}
\end{remark}

\subsection{A characterization of $\cC_A$}\label{s8.4}
In this subsection we give a characterization of $2$-categories of the form $\cC_A$ inside the class of
finitary $2$-categories.

\begin{theorem}\label{thm86}
Let $\cC$ be a finitary $2$-category. Assume that the following conditions are satisfied.
\begin{enumerate}[$($a$)$]
\item\label{thm86.1} $\cC$ has one object $\mathtt{i}$ and exactly two two-sided cells, 
namely, one consisting of the identity $1$-morphism and one other, called $\mathcal{J}$.
\item\label{thm86.2} $\mathcal{J}$ is strongly regular and has the same number of left cells as of right cells.
\item\label{thm86.3} $\cC$ is $\mathcal{J}$-simple.
\item\label{thm86.4} There is a left cell $\mathcal{L}$ in $\mathcal{J}$ such that the
corresponding cell $2$-representation $\mathbf{C}_{\mathcal{L}}$ is exact and $2$-full.
We denote by $A$ the algebra underlying $\mathbf{C}_{\mathcal{L}}$.
\item\label{thm86.5} The $2$-endomorphism algebra of $\mathbbm{1}_{\mathtt{i}}$ surjects
onto the center of $A$.
\end{enumerate}
Then $\cC$ is biequivalent to $\cC_A$.
\end{theorem}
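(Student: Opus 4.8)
The plan is to build a biequivalence $\Xi\colon\cC_A\to\cC$ by exhibiting $\cC$ as a $2$-category of projective bimodules over $A$ and matching up the data on both sides. First I would use condition \eqref{thm86.4}: since $\mathbf{C}_{\mathcal{L}}$ is exact and $2$-full with underlying algebra $A$, the $2$-functor $\mathbf{C}_{\mathcal{L}}$ realizes $\cC$ faithfully (on the cell $\mathcal{J}$) inside the $2$-category of exact endofunctors of $A\text{-}\mathrm{mod}$, i.e.\ inside tensoring with $A$-$A$-bimodules. The identity $1$-morphism goes to the identity functor (tensoring with $A$), so I only need to identify the image of the $1$-morphisms in $\mathcal{J}$. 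Strong regularity together with $\mathcal{J}$-simplicity (conditions \eqref{thm86.2}, \eqref{thm86.3}) forces, by the classification underlying \cite[Theorem~11(i)]{KMMZ} and \cite[Theorem~1]{MMZ} as used in the proof of Theorem~\ref{thm71}, that each indecomposable $1$-morphism in $\mathcal{J}$ acts as tensoring with an indecomposable projective $A$-$A$-bimodule $Ae_i\otimes_{\Bbbk}e_jA$; moreover the condition that $\mathcal{J}$ has equally many left and right cells, each pair intersecting in one element, says that these $1$-morphisms are indexed exactly by pairs $(i,j)$ of primitive idempotents, i.e.\ they biject with the indecomposable summands of $A\otimes_{\Bbbk}A$. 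This already gives a bijection on (isoclasses of) objects and $1$-morphisms between $\cC_A$ and $\cC$, compatible with horizontal composition up to the multiplicities recorded in the Cartan matrix of $A$, which agree on both sides.

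Next I would upgrade this bijection to a genuine $2$-functor $\Xi\colon\cC_A\to\cC$. The defining $2$-representation of $\cC_A$ on $\mathcal{C}\simeq A\text{-}\mathrm{mod}$, call it $\mathbf{M}_A$, is $2$-faithful and its image is precisely tensoring with bimodules in $\mathrm{add}(A\oplus(A\otimes_{\Bbbk}A))$; likewise $\mathbf{C}_{\mathcal{L}}\colon\cC\to\text{(exact functors on }A\text{-}\mathrm{mod})$ is $2$-full by \eqref{thm86.4}, and $2$-faithful on $\mathcal{J}$ by $\mathcal{J}$-simplicity \eqref{thm86.3} (a non-zero $2$-morphism cannot be annihilated, since its annihilator would be a non-zero $2$-ideal missing some identity $2$-morphism in $\mathcal{J}$). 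By the previous paragraph the images of $\mathbf{M}_A$ and of $\mathbf{C}_{\mathcal{L}}$ on the respective copies of $\mathcal{J}$ land in the same full subcategory of bimodule-tensoring functors. So one sets $\Xi:=\mathbf{C}_{\mathcal{L}}^{-1}\circ\mathbf{M}_A$ on $1$-morphisms, which makes sense as $\mathbf{C}_{\mathcal{L}}$ is an equivalence onto its (full, faithful) image; this is a biequivalence on the sub-$2$-categories generated by $\mathcal{J}$ together with the identity, away from the $2$-endomorphisms of $\mathbbm{1}_{\mathtt{i}}$.

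The remaining, and I expect main, obstacle is to match the $2$-endomorphism algebra of $\mathbbm{1}_{\mathtt{i}}$ on the two sides: in $\cC_A$ this is $\mathrm{End}(A\text{ as }A\text{-}A\text{-bimodule})\cong Z(A)$, while in $\cC$ it is some algebra $R$ that, by \eqref{thm86.5}, surjects onto $Z(A)$. One must show this surjection is an isomorphism. The idea is that the action of $R=\mathrm{End}_{\cC}(\mathbbm{1}_{\mathtt{i}})$ on the $1$-morphisms in $\mathcal{J}$ (via horizontal composition, i.e.\ whiskering) is faithful: by $\mathcal{J}$-simplicity a non-zero element of $R$ whiskered with $\mathbbm{1}_{\mathtt{i}}$ into some $1$-morphism $\mathrm{K}\in\mathcal{J}$ cannot vanish for all such $\mathrm{K}$, since otherwise the $2$-ideal it generates would miss $\mathrm{id}_{\mathrm{K}}$. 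On the other hand, whiskering $R$ into the $1$-morphism $Ae_i\otimes_{\Bbbk}e_jA$ factors through the action of $R$ on $\mathrm{End}_{A\text{-}A}(Ae_i\otimes_{\Bbbk}e_jA)\cong e_iAe_i\otimes_{\Bbbk}e_jAe_j$, which is controlled by the center $Z(A)$; combined with faithfulness this forces $R\hookrightarrow Z(A)$, so the surjection of \eqref{thm86.5} is an isomorphism $R\cong Z(A)$. Feeding this back, $\Xi$ is now defined and bijective on $2$-morphisms between all $1$-morphisms (the $\mathcal{J}$-part via $\mathbf{C}_{\mathcal{L}}$, the $\mathbbm{1}_{\mathtt{i}}$-part via $R\cong Z(A)$, and the mixed Hom-spaces are again read off inside the faithful defining representations), compatible with both compositions, horizontal units and vertical units. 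Hence $\Xi$ is a biequivalence $\cC_A\simeq\cC$, which completes the proof.
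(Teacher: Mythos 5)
Your proposal is correct and follows essentially the same route as the paper: realize $\cC$ inside endofunctors of $A\text{-mod}$ via $\mathbf{C}_{\mathcal{L}}$, invoke \cite[Theorem~11(i)]{KMMZ} and \cite[Theorem~1]{MMZ} together with exactness and $2$-fullness from \eqref{thm86.4} to identify the indecomposable $1$-morphisms in $\mathcal{J}$ with indecomposable projective $A$-$A$-bimodules, use \eqref{thm86.2} to see that all of $Ae_i\otimes_{\Bbbk}e_jA$ arise, use $\mathcal{J}$-simplicity \eqref{thm86.3} for $2$-faithfulness, and use \eqref{thm86.5} for surjectivity onto $\mathrm{End}(\mathbbm{1}_{\mathtt{i}})$. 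The only stylistic difference is that you re-derive injectivity of $R\to Z(A)$ by a hands-on whiskering argument (which, as stated, needs both left and right whiskering into $\mathcal{J}$, and is really just unpacking why $\mathcal{J}$-simplicity forces $2$-faithfulness), whereas the paper invokes $2$-faithfulness directly; this is cosmetic and does not change the substance.
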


\begin{proof}
We consider the cell $2$-representation $\mathbf{C}_{\mathcal{L}}$ of $\cC$. It is simple
transitive by construction. By \cite[Theorem~11(i)]{KMMZ}, all indecomposable $1$-morphisms in 
$\mathcal{J}$ act on $\overline{\mathbf{C}}_{\mathcal{L}}(\mathtt{i})$ as functors which send
any object to a projective object. By \cite[Theorem~1]{MMZ}, all indecomposable $1$-morphisms in 
$\mathcal{J}$ act on $\overline{\mathbf{C}}_{\mathcal{L}}(\mathtt{i})$ as functors isomorphic to
tensoring with $\Bbbk$-split bimodules. Thanks to the exactness part in \eqref{thm86.4}, 
all indecomposable $1$-morphisms in  $\mathcal{J}$ act on $\overline{\mathbf{C}}_{\mathcal{L}}(\mathtt{i})$ 
as projective functors, moreover, as indecomposable projective functors due to the 
$2$-fullness part of \eqref{thm86.4}. 

By $\mathcal{J}$-simplicity, the $2$-representation $\mathbf{C}_{\mathcal{L}}$ is $2$-faithful. 
Condition~\eqref{thm86.2} guarantees that all indecomposable projective functors on 
$\overline{\mathbf{C}}_{\mathcal{L}}(\mathtt{i})$ are in the essential image of 
$\overline{\mathbf{C}}_{\mathcal{L}}$.
Now \eqref{thm86.5} implies that the image of this $2$-representation is also $2$-full 
and hence induces a biequivalence with $\cC_A$.
\end{proof}

\vspace{5mm}

\noindent
Vo.~Ma.: Department of Mathematics, Uppsala University, Box. 480,
SE-75106, Uppsala, SWEDEN, email: {\tt mazor\symbol{64}math.uu.se}

\noindent
Va.~Mi.: School of Mathematics, University of East Anglia,
Norwich NR4 7TJ, UK,  {\tt v.miemietz\symbol{64}uea.ac.uk}

\noindent
X.~Z.: Department of Mathematics, Uppsala University, Box. 480,
SE-75106, Uppsala, SWEDEN, email: {\tt xiaoting.zhang\symbol{64}math.uu.se}

\end{document}